\newcommand{\dom}{\operatorname{dom}}
\newcommand{\lex}{\operatorname{lex}}
\newcommand{\comment}[1]{}
\newcommand{\Ht}{\mathrm{ht}}
\newcommand{\CH}{\mathrm{CH}}
\newcommand{\GCH}{\mathrm{GCH}}
\newcommand{\Seq}[1]{\langle #1 \rangle}
\newcommand{\ZFC}{\mathrm{ZFC}}
\newcommand{\range}{\mathrm{range}}
\newcommand{\rest}{\upharpoonright}
\newcommand{\Bcal}{\mathcal{B}}
\newcommand{\Fcal}{\mathcal{F}}
\newcommand{\pphi}{\varphi}
\newcommand{\id}{\mathrm{id}}
\theoremstyle{plain}
\newtheorem{thm}{Theorem}[section]
\newtheorem{lem}[thm]{Lemma}
\theoremstyle{definition}
\newtheorem{defn}[thm]{Definition}
\begin{document}

\title{A minimal Kurepa line}
\author[H. Lamei Ramandi]{Hossein Lamei Ramandi}
\address{Institute f\"{u}r Mathematische Logik und 
Grundlagenforschung  \\
Westf\"{a}lische Wilhelms-Universit\"{a}t 
M\"{u}nster,  Germany}
\email{hlamaira@exchange.wwu.de}

\subjclass[]{}
\keywords{Aronszajn tree, Kurepa tree, Kurepa line, $\omega_1$-metrizable space, Lindel\"of space}

\begin{abstract} 
We show it is consistent with $\ZFC$ 
that there is an everywhere Kurepa line which is order 
isomorphic to all of its dense $\aleph_2$-dense suborders.
Moreover, this Kurepa line does not contain any Aronszajn suborder.
We also show it is consistent with $\ZFC$ that there is a minimal Kurepa line which does not contain any Aronszajn suborder.
\end{abstract}

\maketitle

\section{Introduction}
Let $\kappa$ be a cardinal and $L$ be a linear order. 
$L$ is said to be \emph{$\kappa$-dense} if 
it has no end points and
every non-empty open interval 
has exactly $\kappa$-many elements. 
Baumgartner proved the following theorem in \cite{reals_isomorphic}.
\begin{thm}[\cite{reals_isomorphic}] \label{Baum}
It is consistent with $\ZFC$ that every two $\aleph_1$-dense sets of the reals are order isomorphic.
\end{thm}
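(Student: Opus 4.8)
The plan is to follow Baumgartner's original route: force over a ground model of $\GCH$ by a finite-support iteration $\langle P_\alpha, \dot Q_\alpha : \alpha < \omega_2 \rangle$ of length $\omega_2$ in which every iterand is ccc, and arrange by bookkeeping that every pair of $\aleph_1$-dense sets of reals that ever appears is eventually made order-isomorphic. The basic building block is, for a fixed pair $A, B$ of $\aleph_1$-dense sets (or rather, names for such), a poset whose conditions are finite order-preserving partial injections $p : A \to B$, ordered by reverse inclusion. Since both $A$ and $B$ are $\aleph_1$-dense, for each $a \in A$ the set of conditions with $a \in \dom(p)$ is dense, and symmetrically for each $b \in B$; hence if this poset is ccc then a sufficiently generic filter yields a total order-isomorphism from $A$ onto $B$, witnessing $A \cong B$. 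Being a single function, this isomorphism survives into the final model: later iterands add reals but do not disturb the sets $A, B$ nor the already-added isomorphism, and $\aleph_1$-density of a fixed set is unaffected by adding reals outside it.

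The heart of the argument---and the step I expect to be by far the hardest---is to secure the ccc of the one-step forcing. The naive finite-partial-isomorphism poset is \emph{not} ccc in general: e.g. for $B = -A = \{-a : a \in A\}$ the singletons $\{(a,-a)\}$ form an uncountable antichain, since any two of them violate order-preservation. So the real content is a partition/Ramsey theorem: after reducing an alleged uncountable antichain to conditions of a fixed size $n$ and passing, via a $\Delta$-system argument, to a root-free uniform subfamily, one must show that two conditions can always be amalgamated into an order-preserving map. This amounts to finding, among uncountably many $n$-tuples of reals, two that interleave in the same pattern on the domain side and on the range side simultaneously; ruling out the ``reversing'' configurations that obstruct this (as in the $-A$ example) is precisely where the delicate combinatorics lives, and is what forces one to work with a more carefully chosen poset and/or to maintain an auxiliary hypothesis along the iteration. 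I would isolate this as a single clean lemma (``there is a ccc poset forcing $A \cong B$'') and spend the bulk of the effort there.

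Granting that lemma, the remaining steps are routine bookkeeping and preservation. Each $\aleph_1$-dense set is coded by an $\aleph_1$-sized set of reals, and since the iteration is ccc and $\cf(\omega_2) = \omega_2 > \omega_1$, every such set appears in some initial extension $V^{P_\alpha}$ with $\alpha < \omega_2$; a standard catch-up bookkeeping function then guarantees that each pair $(A,B)$ occurring in the final model is handled at some later stage, so that $A \cong B$ holds there. Finite-support iterations of ccc posets are ccc, so $\aleph_1$ is preserved; using $\GCH$ in the ground model together with a $\Delta$-system argument on the finite supports, the iteration has the $\aleph_2$-cc, so all cardinals are preserved and $2^{\aleph_0} = \aleph_2$ in the extension. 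Putting these together yields a model of $\ZFC$ in which every two $\aleph_1$-dense sets of reals are order-isomorphic, which is the desired conclusion.
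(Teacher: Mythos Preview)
The paper does not give its own proof of this theorem: it is stated with a citation to Baumgartner's original paper \cite{reals_isomorphic} and used only as background motivation. There is therefore nothing in the paper to compare your attempt against.

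That said, your outline is a faithful high-level sketch of Baumgartner's argument, and you correctly identify both the architecture (finite-support ccc iteration of length $\omega_2$ with bookkeeping) and the genuine difficulty (the naive finite-partial-isomorphism poset is not ccc, so one must work harder to isolate a ccc forcing that makes a given pair $A,B$ isomorphic). Your self-assessment that the ccc lemma is where essentially all the content lives is accurate; the rest is indeed routine. If you were to flesh this out, the missing ingredient is precisely the combinatorial device Baumgartner uses to guarantee ccc---restricting to pairs $A,B$ that are already ``mutually dense'' in a strong sense and maintaining an inductive hypothesis along the iteration that makes the partition calculus go through---but as a plan your proposal is sound.
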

If $L$ is a linear order, the \emph{density} of $L$ is the minimum 
cardinality of a dense $D \subset L$.
Baumgartner viewed his theorem as an extension of Cantor's 
famous theorem about $\aleph_0$-dense linear orders: 
$\aleph_0$-dense linear orders are order isomorphic.

The analogue of Baumgartner's theorem for linear orders of density $\aleph_1$ is closely related to Kurepa lines.
A linear order $L$ of cardinality $\aleph_2$ is said to be \emph{Kurepa}
if the closure of countable subsets of $L$ are countable and the density 
of $L$ is $\aleph_1$.
Note that Kurepa lines exist if an only if there are Kurepa trees. 
More precisely, if $T$ is a lexicographically ordered  Kurepa tree
then the set of all cofinal branches of $T$ with the lexicographic order of $T$ is a Kurepa line.

It is a well known classical fact that every two complete linear orders
of density $\aleph_0$ without endpoints are isomorphic.
In order to obtain Theorem \ref{Baum}, 
it  suffices to show that every two dense $\aleph_1$-dense subsets 
of the reals are order isomorphic.
This is the motivation behind the following theorem. 
\begin{thm}\label{main}
It is consistent with $\ZFC$ that there is a Kurepa line which is isomorphic to all of its dense $\aleph_2$-dense suborders. Moreover, this Kurepa line has no Aronszajn suborder. 
\end{thm}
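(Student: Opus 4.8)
The plan is to work over a ground model $V\models\GCH$ and to fix, once and for all, a Kurepa tree $T$ of a carefully chosen form whose lexicographic line $L=\lex(T)$ is Kurepa, everywhere Kurepa (every interval is again Kurepa), and has no Aronszajn suborder. I would build $T$ by a $\diamondsuit$-guided recursion so that it is \emph{self-similar} --- the part of $L$ lying in any nonempty interval is isomorphic to $L$ --- and so that $L$ locally resembles a dense set of reals; the point of the latter is a structural lemma asserting that every uncountable suborder of $L$ contains an uncountable suborder order-embeddable into $\Rbb$, which is exactly the statement that $L$ has no Aronszajn suborder. Since $\GCH$ gives $2^{\aleph_1}=\aleph_2$, the tree $T$ has exactly $\aleph_2$ cofinal branches, and the entire construction will be arranged so that no new branch is ever added to $T$, keeping $L$, its order, and its Dedekind completion fixed in all the models considered.

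Over $V$ I would run a forcing iteration $\Seq{\mathbb{P}_\alpha,\dot{\mathbb{Q}}_\alpha:\alpha<\aleph_3}$ of length $\aleph_3$ with supports of size at most $\aleph_1$, guided by a bookkeeping function enumerating, in the intermediate models, all dense $\aleph_2$-dense suborders of $L$; as each such suborder is a subset of a set of size $\aleph_2$ and $\GCH$ will be maintained, $\aleph_3$ steps catch them all. For a bookkept suborder $L'$ the step $\dot{\mathbb{Q}}_\alpha=\mathbb{Q}(L,L')$ consists of the countable partial order isomorphisms from $L$ to $L'$, ordered by reverse inclusion. This forcing is $\sigma$-closed, and a density argument shows $\bigcup G$ is an isomorphism of $L$ onto $L'$, \emph{provided} that for every condition $p$ and every $x\in L\setminus\dom p$ the cut that $x$ determines in $\dom p$ maps, under $p$, to a cut of $L'$ realized by a genuine point of $L'$, and symmetrically with the roles of $L$ and $L'$ exchanged. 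The density and $\aleph_2$-density of $L'$ populate every nonempty interval, while the self-similar structure of $T$ is what I would invoke to guarantee that these cuts are fillable rather than gaps, so that the domain and range of the generic exhaust all of $L$ and $L'$.

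The verification then splits into preservation tasks. Because each iterand is $\sigma$-closed and the iteration uses supports of size at most $\aleph_1$, the whole iteration is $\sigma$-closed, so it adds no reals and no new countable subsets of $L$; hence, once we know $T$ gains no branch, the closures of countable subsets of $L$ remain countable and $L$ stays Kurepa. A $\Delta$-system argument using $\GCH$ shows the iteration is $\aleph_3$-c.c., which preserves cardinals $\ge\aleph_3$, keeps $2^{\aleph_1}=\aleph_2$ (thereby bounding the number of branches of $T$ by $\aleph_2$), and makes the bookkeeping run. The remaining, and central, task is to preserve $\aleph_2$ and to show that no cofinal branch is added to $T$.

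The main obstacle is exactly this last task, since a $\sigma$-closed forcing can collapse $\aleph_2$. I would prove that each $\mathbb{Q}(L,L')$ is $\aleph_2$-proper: for suitably closed elementary submodels $N\prec H(\theta)$ with $|N|=\aleph_1$ and ${}^{\omega}N\subseteq N$, every condition in $N$ extends to an $(N,\mathbb{Q})$-generic condition. Constructing such a generic condition means running the back-and-forth inside $N$ while filling every cut that $N$ can see, and it is precisely here that the self-similarity of $T$ and the absence of Aronszajn suborders are used: they guarantee that each cut encountered is realized in $L$ and in $L'$, so the construction never gets stuck, and the same analysis shows that no new branch is added to $T$. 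The delicate point is that iterating $\aleph_2$-proper forcings while adding no reals is not automatic, so I expect to have to equip each $\mathbb{Q}(L,L')$ with a completeness system in the sense of Shelah and to quote the corresponding iteration theorem for $\le\aleph_1$-supported, $\sigma$-closed, $\aleph_2$-proper iterations. Granting these facts, the bookkeeping yields that in $V[G]$ every dense $\aleph_2$-dense suborder of the Kurepa line $L$ is isomorphic to $L$, and the structural lemma gives that $L$ has no Aronszajn suborder, which is the theorem.
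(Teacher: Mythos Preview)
Your approach has a genuine gap at the density step for $\mathbb{Q}(L,L')$, and a second one at the preservation of $\aleph_2$.

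For density: take a condition $p$ and $x\in L\setminus\dom p$. Self-similarity of $T$ tells you nothing about whether the image cut in $L'$ is realized. Infinitely many points of $\dom p$ may pass through a common node $t$ of $T$ together with $x$, and under $p$ these may be sent to branches that accumulate to a genuine gap of $L'$ inside the cone above the image of $t$; $\aleph_2$-density of $L'$ does not prevent this. The paper's forcing $\Fcal_{X,Y}$ is engineered to avoid exactly this: a condition carries, in addition to the countable branch map $\pphi_p$, a level-preserving $<_{\lex}$-preserving tree isomorphism $f_p$ on a closed countable set of levels, and it requires that only \emph{finitely} many branches of $\dom\pphi_p$ pass through any fixed node at the top level $\alpha_p$. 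Extending $f_p$ by one more level then only asks one to match finitely many marked points inside two copies of $\Qbb$, which is where Lemma~\ref{rational_copy} and the absence of local end points are actually used. Your ``cuts are fillable'' claim needs an argument of this shape, and self-similarity alone does not supply it.

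For $\aleph_2$: the paper remarks explicitly, after Lemma~\ref{cc}, that $\Fcal_{X,Y}$ does \emph{not} satisfy the relevant $\aleph_2$-amalgamation condition ($\Omega(T)$-cic), precisely because of the lexicographic order on branches. The escape is that $T$ is added generically at stage~$0$: when amalgamating two isomorphic countable models $M_\xi$, $M_\eta$, one first builds the common lower bound in $Q$ so as to \emph{choose} the relative $<_{\lex}$-position of $b_\alpha$ and $b_{h(\alpha)}$ at the new top level, and only afterwards checks that the $\Fcal$-components amalgamate. With $T$ fixed in the ground model you lose this freedom; two isomorphic models may disagree about the lex order of corresponding branches in a way that blocks any common extension, so there is no reason your $\mathbb{Q}(L,L')$ should be $\aleph_2$-proper, and Shelah-style completeness systems do not manufacture compatibility where none exists. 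Making the tree itself part of the iteration is doing essential work in the paper's proof.
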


The work involved in proving Theorem \ref{main} can be modified 
to show the following theorem.

\begin{thm}\label{min}
It is consistent with $\ZFC$ that there is a minimal Kurepa line which does not contain any Aronszajn suborders.
\end{thm}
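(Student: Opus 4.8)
The plan is to adapt the iteration that establishes Theorem \ref{main}, weakening ``isomorphic to'' into ``embeds into'' and enlarging the class of targeted suborders. Here $L$ is \emph{minimal} in the sense that every suborder of $L$ of size $\aleph_2$ contains an isomorphic copy of $L$; equivalently, there is an order-embedding $L \hookrightarrow L'$ for each $L' \subseteq L$ with $|L'| = \aleph_2$. I would start, exactly as for Theorem \ref{main}, in a ground model of $\GCH$ carrying a Kurepa tree $T \subseteq 2^{<\omega_1}$ with exactly $\aleph_2$ cofinal branches, chosen coherent enough that its lexicographically ordered branch space $L$ has no Aronszajn suborder. This last property is arranged once, in the ground model, and is then shown to be preserved by the whole iteration, so it need not be revisited step by step.

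The combinatorial heart is a forcing $\mathbb{P}_{L'}$ which, for a fixed target $L' \subseteq L$ of size $\aleph_2$, generically adjoins an order-embedding of $L$ into $L'$. Its conditions are finite strictly order-preserving approximations $p\colon \dom(p) \to L'$, with $\dom(p)$ a finite subset of $L$, augmented by a finite $\in$-chain of countable elementary submodels serving as side conditions; the models promise that any committed partial embedding can be extended while respecting the cuts of $L$ already determined. Where the proof of Theorem \ref{main} builds a surjection onto a dense $\aleph_2$-dense target, here I need only an injection into an arbitrary target, so the extension lemmas are more flexible: given a finite $p$ and a new $x \in L$, the relevant committed values bound an interval into which $x$ can be placed, and one extends $p$ to $x$. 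Since for each $x \in L$ the set of conditions with $x \in \dom(p)$ is dense, a single generic for $\mathbb{P}_{L'}$ produces a \emph{total} embedding on all of $L$. Using the side conditions one shows, in the usual way, that $\mathbb{P}_{L'}$ is proper (indeed $<\!\omega_1$-proper) and, by a $\Delta$-system argument on the finite working parts together with $\GCH$, that it has the $\aleph_2$-chain condition; in particular it adds no cofinal branch to $T$.

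I would then iterate the forcings $\mathbb{P}_{L'}$ with countable support over $\omega_2$, running a bookkeeping function to schedule the targets to be handled. Two preservation facts must be carried through: first, properness and the $\aleph_2$-cc survive the iteration, so all cardinals are preserved and $2^{\aleph_1} = \aleph_2$ holds in the extension; second, and more delicately, no new cofinal branch is ever added to $T$, so $T$ remains Kurepa and $L$ remains an $\aleph_2$-sized line of density $\aleph_1$ with no Aronszajn suborder. The residual difficulty is a counting obstacle: there are $2^{\aleph_2}$ suborders of size $\aleph_2$ but only $\omega_2$ stages. I would resolve this as in Baumgartner's separable argument for Theorem \ref{Baum}, by reflection: writing an arbitrary target $L'$ of the final model as an increasing union of traces along a continuous chain of elementary submodels, one argues that cofinally many of these traces are reflected well enough that an embedding of $L$ into one reflected trace, forced at a stage dictated by the bookkeeping, already witnesses $L \hookrightarrow L'$. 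The new case relative to Theorem \ref{main} is precisely the \emph{thin} targets, those containing no dense $\aleph_2$-dense suborder, for which no isomorphism is available and the embedding forcing must be used directly.

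The main obstacle I expect is the simultaneous control of two competing demands across the long iteration: the forcings $\mathbb{P}_{L'}$ must be genuinely proper, with side conditions subtle enough to guarantee that the generic embedding is total and order-preserving, while the countable support iteration must add \emph{no} new cofinal branch to $T$, so that the Kurepa property, and with it the absence of Aronszajn suborders, persists to the final model. Establishing this ``no new branches'' preservation theorem for the iteration of the $\mathbb{P}_{L'}$, and verifying that the reflection argument really reduces the $2^{\aleph_2}$ potential targets to the $\omega_2$ tasks actually scheduled, is where essentially all of the work lies; it is exactly the portion of the Theorem \ref{main} argument that must be reworked rather than quoted verbatim.
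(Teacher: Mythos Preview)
Your plan diverges from the paper's proof in several places, and at least two of the divergences are genuine gaps rather than alternative routes.

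First, the paper does \emph{not} begin with a Kurepa tree in the ground model. The tree $T$ is added generically as the first step $Q_0$ of the iteration (Definition~\ref{Q}), and this is essential: the $\aleph_2$-chain condition of the full iteration (Lemma~\ref{cc}) is established by an argument that, at coordinate $0$, uses the freedom to choose the lexicographic order between $b_\alpha(\delta+1)$ and $b_{h(\alpha)}(\delta+1)$. The remark immediately following Lemma~\ref{cc} says this explicitly: the embedding forcings $\Fcal_{X,Y}$ do \emph{not} satisfy $\Omega(T)$-cic on their own, and only the genericity of $T$ rescues $\aleph_2$. Your proposal to fix $T$ in the ground model and rely instead on finite side conditions for $\aleph_2$-preservation is a different mechanism for which you give no argument; with a fixed lexicographic order on $\Bcal(T)$ the amalgamation needed for $\aleph_2$-cc can genuinely fail.

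Second, your iteration has length $\omega_2$, while the paper's has length $\omega_3$. Under $\GCH$ there are $\aleph_3$ nice names for $\aleph_2$-sized suborders of $\Bcal(T)$, and with an $\aleph_2$-cc iteration of length $\omega_3$ the standard bookkeeping catches them all. Your reflection sketch does not close this gap: a target $L'$ of the final model need not be the trace of anything handled at an intermediate stage, and the Baumgartner-style reduction you invoke works because $\aleph_1$-dense sets of reals are determined by $\aleph_1$ data, whereas here the targets carry $\aleph_2$ bits of information. Note also that countable-support iterations of proper posets do not in general inherit the $\aleph_2$-cc from their iterands, so your claimed preservation of $\aleph_2$-cc is itself unsupported.

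Third, your iterands live in a different framework from the paper's. The paper uses \emph{countable} conditions that are $\sigma$-closed or $\Omega(T)$-complete; this is exactly what feeds Lemmas~\ref{iteration}, \ref{No A subtree} and~\ref{No New branch} to guarantee that no Aronszajn subtree and no new cofinal branch appear. Finite working parts with models as side conditions are not $\Omega(T)$-complete, so those lemmas do not apply and you would need entirely new preservation theorems for ``no new branches'' and ``no Aronszajn subtree'' through the iteration---precisely the point you flag as the main obstacle, but without a proposed solution.

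Finally, the paper's route to the embedding is indirect and worth recording: given $L$ with $\bigcup L$ everywhere Kurepa, it first forces with the $\sigma$-closed $R_L$ (Definition~\ref{R}) to carve out $Z_L \subseteq L$ satisfying $Z_L=\Bcal(\bigcup Z_L)$ with no local end points (Lemmas~\ref{Z_closed}--\ref{Z_large}), and then forces with $\Fcal_{D,Y_L}$ an isomorphism between a fixed $\aleph_1$-dense $D\subseteq\Bcal(T)$ and an $\aleph_1$-dense $Y_L\subseteq Z_L$. The embedding $\Bcal(T)\hookrightarrow L$ is read off this isomorphism of dense subsets, not produced by a direct finite-approximation forcing.
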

It is worth mentioning that the minimal Kurepa type $K$ of Theorem
\ref{min} has the following property. Whenever $K = X \cup Y$ is a partition such
that every non-empty open subset of $K$ has exactly $\aleph_2$-many elements
of both pieces of the partition then $X$ and $Y$ are order isomorphic. Note that $K$ is $\omega_1$-metrizable and Lindel\"of of size $\aleph_2$ whose topology is generated by the order from $K$.
Such spaces were  considered as a potential 
higher analogue of the unit interval in \cite{Sikorski}.
The partitions we mentioned above represent a 
behavior of $K$ which stands in total contrast with the properties of the unit
interval of the reals.

Our work can be viewed as a continuation of \cite{second} and \cite{third}.
The work in \cite{second} shows it is consistent with $\ZFC$ that 
there is a Kurepa tree which is minimal with respect to club embeddings 
and which contains no Aronszajn subtrees. 
It is not clear of the forcing notions in \cite{second} will preserve $\omega_1$ if 
we require that the trees are lexicographically ordered and the embeddings 
preserve this order.
This problem was resolved in \cite{third}. 
However, in order to preserve $\omega_2$ some extra structure 
was added to the tree. 
The added structure only allows to add embeddings between 
dense subsets of size $\aleph_1$.
Here we preserve $\aleph_1, \aleph_2$ in a more flexible way.

\section{Preliminaries}
In this section we review some facts and terminology regarding $\omega_1$-trees, liner orders and countable support iteration of some type of forcing notions.
The material in this section can be found in \cite{second} and \cite{no_real_Aronszajn}.

For more clarity we fix some notation and terminology.
A tree $T$ is said to be an $\omega_1$-tree if it has countable levels and $\Ht(T)=\omega_1$. 
A subtree of a tree is a subset which is downward closed.
Assume $T$ is an $\omega_1$-tree.
If $t \in T$, the $T_t$ is the set of all elements in $T$ that are comparable with $t$. 
We refer to the forst level of $T_t$ which is not a singleton by $t^+$.
$T_t$ is also called a cone of $T$.
We say $T$ is pruned if it has no countable cone. 
If $\alpha$ is an ordinal, $T(\alpha)$ is the set of all elements in $T$ whose 
height is $\alpha$.
A branch in $T$ is a downward closed chain in $T$.
A branch is said to be cofinal if it intersects all levels of $T$.
$\Bcal(T)$ is the collection of all cofinal 
branches of $T$.
If $b$ is a branch then $b(\alpha)$ is  the unique element in $b \cap T(\alpha)$.
If $t \in T$ and $\alpha \leq \Ht(t)$ then $t(\alpha) = b(\alpha)$ for some (any)
branch $b$ with $t \in b$.
If $A$ is a set of ordinals then $T \rest A$ is the collection of all elements of 
$T$ whose height is in $A$. The order on $T \rest A$ is the same as $T$.
A tree $T$ is said to be \emph{normal} if whenever $s,t$ are two distinct elements 
of the same limit height then they have different sets of predecessors.
Assume $a,b$ are among elements and cofinal branches of $T$.
$\Delta(a,b)$ is the smallest $\alpha$ such that $b(\alpha) \neq a(\alpha)$.
In particular $\Delta(a,b)$ is never a limit ordinal in normal trees.

Assume $T$ is a lexicographically ordered $\omega_1$-tree, 
$U$ is a downward closed subtree of $T$ and $b \in \Bcal(U)$. 
We say 
$b$ is a local right end point of $U$ if there is $t \in b$ such that
$b(\alpha) = \max(U_t(\alpha), <_{\lex})$,
 for all 
$\alpha \in \omega_1 \setminus \Ht(t)$.
Observe that this definition implicitly introduces 
a one-to-one function from the set of all right end points of $U$ to 
$T$, which implies that there are at most $\aleph_1$ many $b \in \Bcal(U)$ that are local right end points of $U$.  
Note that if $b \in L \subset \Bcal(T)$ is  a local right end point of 
$\bigcup L$
then there is $t \in b$ such that $b = \max(L \cap \Bcal(T_t), <_{\lex})$. Analogous definitions and observations can be made for local left end points. If $b \in \Bcal(U)$ is a local right or left end point of $U$, it is called a local end point.

Fix a regular cardinal $\theta$. $H_\theta$ is the collection of all sets of hereditary cardinality less than $\theta$.
We always consider $H_\theta$ with a fixed well ordering without mentioning it.
Assume $\mathcal{P}$ is a forcing notion and $\theta$ is a regular cardinal such that $\mathcal{P}$ and the powerset
of $\mathcal{P}$ are in $H_\theta$. A countable elementary submodel $N$ of $H_\theta$ is said to be \emph{suitable} for $\mathcal{P}$
if $\mathcal{P} \in N$. A decreasing sequence $\langle p_n : n \in \omega \rangle$ of elements of $\mathcal{P}\cap N$ 
is said to be \emph{$(N,\mathcal{P})$-generic} if 
for all dense subsets $D$ of $\mathcal{P}$ that are in $N$ there is an $n \in \omega$ such that $p_n \in D$.
When $T$ is a normal $\omega_1$-tree, $\Omega(T)$ is the collection 
of all $M \cap \Bcal(T)$ such that $M$  is suitable for $T$ and 
for every $t \in T(M \cap \omega_1)$ there is $b \in M \cap \Bcal(T)$ with 
$t \in b$.

%The main topology on $K$ and description of open sets and their correspondence with antichains.\\

\begin{lem}\label{rational_copy}
Assume $T$ is a lexicographically ordered $\omega_1$-tree 
such that 
$(v^+, <_{\lex})$ is isomorphic to the rationals
for all $v \in T$.
Also assume $U$ is a downward closed pruned subtree of $T$, $t \in U$ and $U$ has no local end points.
Let $\theta > (2^{\omega_2})^+$ be a regular cardinal and 
$M \prec H_\theta$ be countable such that, $\{U,T,t \} \in M$ and $\delta = M \cap \omega_1.$
Then ($U_t(\delta), <_{\lex}$) is isomorphic to the rationals.
\end{lem}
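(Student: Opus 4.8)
The plan is to invoke Cantor's characterization of $\Qbb$: a countable linear order is isomorphic to the rationals exactly when it is nonempty, dense-in-itself, and has no endpoints. Two of these are immediate. Since $T$ is an $\omega_1$-tree its levels are countable, so $U_t(\delta)\subseteq T(\delta)$ is countable; and since $U$ is pruned the cone $U_t$ has no countable cone, hence is unbounded in height and meets level $\delta$, so $U_t(\delta)\neq\emptyset$. Before attacking the remaining two conditions I would record one elementarity observation that drives everything: for each $\alpha<\delta$ the level set $U(\alpha)$ is a \emph{countable} set lying in $M$, so $U(\alpha)\subseteq M$; consequently every proper initial segment $s\rest\alpha$ of a node $s\in U_t(\delta)$ (with $\alpha<\delta$) belongs to $M$, even though $s$ itself does not.

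The engine of the proof is the following sublemma: for every $c\in U\cap M$ the cone $(U_c(\delta),<_{\lex})$ has no $<_{\lex}$-maximum, and symmetrically no $<_{\lex}$-minimum. I would prove the maximum case by contradiction using reflection through $M$. Suppose $s=\max(U_c(\delta),<_{\lex})$. First I claim $s(\alpha)=\max(U_c(\alpha),<_{\lex})$ for every $\alpha\in[\Ht(c),\delta)$: otherwise pick $u\in U_c(\alpha)$ with $s(\alpha)<_{\lex} u$, extend $u$ to a node $s'\in U_c(\delta)$ using that $U$ is pruned, and observe that $s$ and $s'$ split exactly where $s(\alpha)$ and $u$ split, whence $s<_{\lex} s'$, contradicting maximality. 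Now let $R$ be the set of $\beta\in[\Ht(c),\omega_1)$ for which there is $w\in U_c(\beta)$ with $w(\alpha)=\max(U_c(\alpha),<_{\lex})$ for all $\alpha\in[\Ht(c),\beta]$. Since $\{U,c\}\subseteq M$, the set $R$ is definable from parameters in $M$ and so $R\in M$; it is downward closed above $\Ht(c)$, and by the previous claim $[\Ht(c),\delta)\subseteq R$. If $R$ were bounded in $\omega_1$ its supremum would lie in $M\cap\omega_1=\delta$, contradicting $[\Ht(c),\delta)\subseteq R$; hence $R=[\Ht(c),\omega_1)$. The (unique) witnesses cohere into a single cofinal branch $b^{\ast}\in\Bcal(U)$ with $b^{\ast}(\alpha)=\max(U_c(\alpha),<_{\lex})$ for all $\alpha\geq\Ht(c)$, i.e.\ $b^{\ast}$ is a local right end point of $U$ --- contradicting the hypothesis that $U$ has none. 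The minimum case is identical with left in place of right.

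Granting the sublemma the lemma follows quickly. Applying it with $c=t$ shows $(U_t(\delta),<_{\lex})$ has no endpoints. For density-in-itself, take $s_0<_{\lex} s_1$ in $U_t(\delta)$ and let $\gamma=\Delta(s_0,s_1)$, a successor ordinal below $\delta$; then $a:=s_0(\gamma)\in M$ by the initial-segment observation, and $a$ lies above $t$. Every $s\in U_a(\delta)$ agrees with $s_1$ on all levels below $\gamma$ and satisfies $s(\gamma)=a<_{\lex} s_1(\gamma)$, so $s<_{\lex} s_1$; moreover $s_0\in U_a(\delta)\subseteq U_t(\delta)$. Thus if no element of $U_t(\delta)$ lay strictly between $s_0$ and $s_1$, then every element of $U_a(\delta)$ would be $\le_{\lex} s_0$, making $s_0=\max(U_a(\delta),<_{\lex})$ and contradicting the sublemma applied to $c=a\in U\cap M$. (Here the hypothesis that each $(v^{+},<_{\lex})$ is a copy of $\Qbb$ is what guarantees the branching that keeps these cones from degenerating; it also affords a more hands-on route to density, namely choosing a node of $v^{+}\cap U$ strictly between $s_0(\gamma)$ and $s_1(\gamma)$.) Countability, nonemptiness, density and the absence of endpoints together give $(U_t(\delta),<_{\lex})\cong\Qbb$ by Cantor's theorem.

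I expect the main obstacle to be the sublemma, specifically the passage from ``locally rightmost up to level $\delta$'' to an actual local right end point of $U$. The difficulty is that the hypothesis forbids local end points, a statement about branches of full length $\omega_1$, whereas a maximum at the single level $\delta$ only controls behaviour below $\delta$; the definability of $R$ together with $\delta=M\cap\omega_1$ is exactly the device that reflects the finite-so-far obstruction up to a genuine cofinal branch, and getting this bookkeeping right --- coherence and uniqueness of the rightmost witnesses, and the fact that $\sup R$ cannot slip below $\delta$ --- is the delicate part.
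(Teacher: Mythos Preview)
Your argument is correct and rests on the same reflection idea as the paper: a $<_{\lex}$-extreme point at level $\delta$ inside a cone $U_c$, with $c\in M$, yields by elementarity a cofinal branch through the levelwise maxima, i.e.\ a local end point of $U$, contradicting the hypothesis.

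The organizational difference is that you isolate this as a sublemma (``for every $c\in U\cap M$, $U_c(\delta)$ has no $<_{\lex}$-extremum'') and then invoke it twice, once with $c=t$ for endpoints and once with $c=s_0(\gamma)$ for density. The paper instead argues density directly: assuming no node lies between $s_0$ and $s_1$, it chooses $u_0<_T s_0$, $u_1<_T s_1$ at the same height, shows $s_0(\alpha)=\max(U_{u_0}(\alpha),<_{\lex})$ for all $\alpha<\delta$, and then by elementarity obtains branches $b_0,b_1\in M\cap\Bcal(U)$ with $s_i\in b_i$; finally it uses that $b_1$ is not a local end point to locate $v\in M\cap U$ with $v<_{\lex}b_1(\Ht(v))$, and extends $v$ to a node strictly between $s_0$ and $s_1$. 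Your packaging is a bit cleaner and makes the elementarity bookkeeping explicit (the definable set $R$ and why $\sup R\notin\delta$); the paper's version is shorter but leaves that reflection step implicit in the phrase ``by elementarity.'' Both routes make equally light use of the hypothesis that $(v^+,<_{\lex})\cong\Qbb$; as you note, what really does the work is that $U$ is pruned and has no local end points.
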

\begin{proof}
Let $s_0, s_1$ be in $U_t(\delta)$. We show there is 
$s \in U_t(\delta)$ such that $s_0<_{\lex} s <_{\lex} s_1$.
Assume for a contradiction that there is no such $s$.
Let $u_0<_T s_0$ and $u_1 <_T s_1$ 
be distinct elements of the same height.
Since $U$ is pruned, for every $\alpha \in \delta \setminus \Ht(u_0)$
the linear order
$(U_{u_0}(\alpha), <_{\lex})$ has a maximum.
Moreover, for $\alpha < \beta$ in $\delta \setminus \Ht(u_0)$
$$\max(U_{u_0}(\alpha), <_{\lex})=s_0(\alpha) <_T s_0(\beta) =\max(U_{u_0}(\beta), <_{\lex}).$$
So by elementarity, there is a cofinal branch $b_0 \in M \cap \Bcal(U)$ with $s_0 \in b_0$.
Similarly, there is a cofinal branch $b_1 \in \Bcal( U) \cap M$ such that $s_1 \in b_1$.

Let $\Delta(b_0, b_1) \in \delta_0 \in \delta $
and $u = b_1(\delta_0)$.
By elemetarity and the fact that $b_1$ is not a local end point, there is a $v \in M \cap U$  such that $u <_T v$ and  $v <_{\lex} b_1(\Ht(v))$.
Since $U$ is pruned there is $s$ above $v$ in $U(\delta)$.
But then 
$s_0=b_0(\delta) <_{\lex} s  <_{\lex} b_1(\delta) = s_1.$
Similar argument shows that $U_t(\delta)$ has no end points. 
\end{proof}

Let us review some facts and definitions from \cite{second}.

\begin{defn}[\cite{second}]
Assume $X$ is uncountable and $S \subset [X]^\omega$ is stationary. A poset $\mathcal{P}$ is 
said to be \emph{$S$-complete}
if every descending $(M, \mathcal{P})$-generic
sequence $\langle p_n: n\in \omega \rangle$ has a lower bound, for all $M$ with $M \cap X \in S$ and
$M$ suitable for $X,\mathcal{P}$.
\end{defn}

\begin{lem}[\cite{second}] \label{iteration}
Assume $X$ is uncountable and $S\subset[X]^\omega$ is stationary. Then $S$-completeness is 
preserved under countable support iterations.
\end{lem}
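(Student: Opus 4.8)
The plan is to argue by induction on the length $\gamma$ of the countable support iteration $\Seq{\mathcal{P}_\alpha, \dot Q_\alpha : \alpha < \gamma}$, under the assumption that every iterand $\dot Q_\alpha$ is forced (in $V^{\mathcal{P}_\alpha}$) to be $S$-complete and that $S$-completeness has already been verified for each proper initial segment. Fix a suitable $M \prec H_\theta$ with $M \cap X \in S$ and a descending $(M,\mathcal{P}_\gamma)$-generic sequence $\Seq{p_n : n \in \omega}$; the task is to produce a lower bound $q \in \mathcal{P}_\gamma$. The one observation I would record first, since it is used at every stage, is that genericity by itself propagates the side condition on $X$: if $q^\ast$ is any lower bound of $\Seq{p_n \rest \alpha : n \in \omega}$ in $\mathcal{P}_\alpha$, then $q^\ast$ forces $M \cap \dot G_\alpha$ to be $M$-generic, so any $\mathcal{P}_\alpha$-name in $M$ for an element of $X$ is decided by a condition in $M \cap \dot G_\alpha$ and hence takes a value in $M$. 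Consequently $q^\ast \Vdash M[\dot G_\alpha] \cap X = M \cap X \in S$. This is exactly what allows the $S$-completeness of the iterands to be invoked inside intermediate extensions.

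For the successor step $\gamma = \beta + 1$, write $\mathcal{P}_{\beta+1} = \mathcal{P}_\beta * \dot Q_\beta$. The restricted sequence $\Seq{p_n \rest \beta : n \in \omega}$ is descending and $(M,\mathcal{P}_\beta)$-generic, so by the inductive hypothesis it has a lower bound $q_\beta$. Below $q_\beta$ the sequence of ``last coordinates'' $\Seq{p_n(\beta) : n \in \omega}$ is forced to be descending in $\dot Q_\beta$ and $(M[\dot G_\beta], \dot Q_\beta)$-generic, using that dense subsets of $\dot Q_\beta$ named in $M[\dot G_\beta]$ pull back to dense sets named in $M$ that the original sequence meets. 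Since $q_\beta \Vdash M[\dot G_\beta] \cap X = M \cap X \in S$ by the observation above, and $\dot Q_\beta$ is forced $S$-complete, there is a name $\dot q(\beta)$ for a lower bound of this coordinate sequence. Then $q = q_\beta {}^\frown \dot q(\beta)$ is a lower bound of $\Seq{p_n : n \in \omega}$.

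The heart of the proof is the limit step, where I would first reduce to the case $\cf(\gamma) = \omega$. Each $\supp(p_n) \in M$ is countable, so $\sup \supp(p_n) \in M \cap \gamma$, and therefore $\bigcup_n \supp(p_n) \subseteq \delta^\ast := \sup(M \cap \gamma)$. Since $M$ is countable, $\delta^\ast \notin M$ and $\cf(\delta^\ast) = \omega$, witnessed by some increasing $\Seq{\gamma_k : k \in \omega}$ with $\gamma_k \in M$ and $\gamma_k \nearrow \delta^\ast$; moreover each $\supp(p_n)$ is bounded by some $\gamma_{k(n)}$. Because all supports lie below $\delta^\ast$, it suffices to build a lower bound in $\mathcal{P}_{\delta^\ast}$ and extend it by the trivial condition on $[\delta^\ast, \gamma)$. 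I would then construct $q$ by a recursion that defines coherent lower bounds $q_k \in \mathcal{P}_{\gamma_k}$ of $\Seq{p_n \rest \gamma_k : n \in \omega}$ with $q_{k+1} \rest \gamma_k = q_k$, and set $q = \bigcup_k q_k$. Since every $p_n$ is controlled by a finite stage $\gamma_{k(n)}$, one gets $q \rest \gamma_{k(n)} = q_{k(n)} \le p_n \rest \gamma_{k(n)}$, hence $q \le p_n$, and $q$ has countable support contained in $\delta^\ast$, so it is a genuine condition.

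The main obstacle, as always for countable support preservation theorems, is producing the $q_k$ \emph{coherently}: the inductive hypothesis as stated only yields some lower bound at each stage, not one extending the previously built $q_k$. I would therefore strengthen the statement being proved by induction to a relative version, asserting that for every $\beta < \gamma$ and every lower bound of $\Seq{p_n \rest \beta : n \in \omega}$ there is a lower bound of $\Seq{p_n \rest \gamma : n \in \omega}$ restricting to it on $\beta$; this relative form is what the successor step actually delivers and what drives the recursion across the intervals $[\gamma_k, \gamma_{k+1})$. The remaining work is to verify that passing to the quotient forcing $\mathcal{P}_{\gamma_{k+1}}/\dot G_{\gamma_k}$ preserves the genericity of the tails of $\Seq{p_n : n \in \omega}$ over $M[\dot G_{\gamma_k}]$ and the membership $M[\dot G_{\gamma_k}] \cap X \in S$, so that the inductive hypothesis applies at each step; granting this, the union $q = \bigcup_k q_k$ is the desired lower bound, completing the limit case and the induction.
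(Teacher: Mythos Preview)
The paper does not supply its own proof of this lemma; it is quoted from \cite{second} and simply cited. So there is no in-paper argument to compare your attempt against.

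On its own merits, your sketch is the standard preservation argument (essentially Shelah's proof that properness is preserved by countable support iterations, specialized to $S$-completeness), and the key moves are all correctly identified: the observation that any lower bound of an $(M,\mathcal{P}_\alpha)$-generic sequence forces $M[\dot G_\alpha]\cap X = M\cap X \in S$, the reduction of the limit case to $\delta^\ast=\sup(M\cap\gamma)$ of cofinality $\omega$, and the need to strengthen the induction hypothesis to a relative form so that the stagewise lower bounds $q_k$ can be built coherently. This is exactly how the result is proved in the literature, and there is no gap in your outline.
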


\begin{lem}[\cite{second}]\label{No A subtree} 
Assume $T$ is an $\omega_1$-tree 
which has uncountably many cofinal branches and which has no Aronszajn subtree 
in the ground model $\mathbf{V}$. Also assume
 $\Omega(T)\subset [\mathcal{B}(T)]^\omega$ is stationary  and
$\mathcal{P}$ is an $\Omega(T)$-complete forcing.
Then  $T$ has no Aronszajn subtree in $\mathbf{V}^\mathcal{P}$.
\end{lem}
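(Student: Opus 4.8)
The plan is to argue by contradiction. Suppose $p \in \mathcal{P}$ forces that $\dot A$ is an Aronszajn subtree of $T$. First I would replace $\dot A$ by its tall part: working in $\mathbf{V}^\mathcal{P}$, let $\dot A^*$ be the set of $t \in \dot A$ whose cone $\dot A_t$ in $\dot A$ has height $\omega_1$. A routine argument, using that a countable union of bounded subsets of $\omega_1$ is bounded, shows $p$ forces $\dot A^*$ to be a downward closed subtree of $T$ of height $\omega_1$ in which every node has extensions at every higher level. Since $\dot A^* \subseteq \dot A$ has no cofinal branch, it is again Aronszajn, so I may assume from the outset that $p \Vdash$ \emph{$\dot A$ is tall and every node of $\dot A$ extends arbitrarily high in $\dot A$}.

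Next I reflect to the ground model in order to invoke the hypothesis on $\mathbf{V}$. In $\mathbf{V}$ set $A^\dagger = \{ t \in T : \exists r \le p \ \, r \Vdash t \in \dot A \}$. As $\dot A$ is forced downward closed, $A^\dagger$ is a subtree of $T$, and as $\dot A$ is forced tall, $A^\dagger$ has height $\omega_1$. Because $T$ has no Aronszajn subtree in $\mathbf{V}$, the subtree $A^\dagger$ has a cofinal branch $b \in \mathbf{V} \cap \mathcal{B}(T)$. The goal is now to find $q \le p$ forcing some ground model branch of $T$ into $\dot A$; since $\dot A$ is forced to have no cofinal branch, any such $q$ contradicts the choice of $p$. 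The key simplification is that $\dot A$ is downward closed, so it suffices to force $\{\alpha : b(\alpha) \in \dot A\}$ to be unbounded in $\omega_1$ for a single ground model branch $b$: unboundedly many nodes of $b$ in $\dot A$ force, by downward closure, all of $b$ into $\dot A$.

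To produce such a $q$ I would run a recursive construction through elementary submodels, using $\Omega(T)$-completeness to pass through limits. By stationarity of $\Omega(T)$, fix a suitable countable $M \prec H_\theta$ with $\{T, \mathcal{P}, \dot A, p, b\} \subseteq M$ and $M \cap \mathcal{B}(T) \in \Omega(T)$, and put $\delta = M \cap \omega_1$. Building a descending $(M,\mathcal{P})$-generic sequence $\langle p_n \rangle$ below $p$, while at each step forcing a further node of $\dot A$ into a $<_T$-increasing chain whose heights are cofinal in $\delta$, I obtain a lower bound $q_M$ by $\Omega(T)$-completeness. Here the defining property of $M \cap \mathcal{B}(T) \in \Omega(T)$ enters: the increasing chain determines a node of $T$ at level $\delta$, which lies on some branch $c \in M \cap \mathcal{B}(T)$, and $q_M$ forces $c \rest \delta \subseteq \dot A$; the node of $\dot A$ at level $\delta$ then extends within $\dot A$ because $\dot A$ was arranged to be tall everywhere. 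This gives, for a single good level $\delta$, a ground model branch agreeing with the generic chain and forced into $\dot A$ below $\delta$.

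The main obstacle is the passage from one good level $\delta$ to cofinally many levels of $\omega_1$, which is precisely what an Aronszajn tree forbids and what $\Omega(T)$-completeness is designed to overcome. I would iterate the previous step along an increasing continuous chain of suitable models, at limit stages of countable cofinality invoking $\Omega(T)$-completeness for lower bounds and the covering property of $\Omega(T)$ to keep the chosen level-$\delta$ nodes on a single coherent ground model branch $b$. The delicate point, and the crux of the whole argument, is that lower bounds are guaranteed only at the $\Omega(T)$-good limit stages; arranging the bookkeeping so that the recursion only ever needs lower bounds there, i.e.\ assembling countably many good stages cofinal in $\omega_1$ into one condition $q$ forcing unboundedly many $b(\alpha) \in \dot A$, is where the stationarity of $\Omega(T)$ together with the genericity calculus is used in an essential way. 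Once $q \Vdash \{\alpha : b(\alpha) \in \dot A\}$ is unbounded, downward closure gives $q \Vdash b \subseteq \dot A$, so $q$ forces $b$ to be a cofinal branch of $\dot A$, contradicting that $q \le p$ forces $\dot A$ to be Aronszajn. Hence no such $\dot A$ exists and $T$ has no Aronszajn subtree in $\mathbf{V}^\mathcal{P}$.
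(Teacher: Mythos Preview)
The paper does not give its own proof of this lemma; it is quoted from \cite{second} without argument, so there is nothing in the present paper to compare against. Let me therefore evaluate your proposal on its own merits.

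Your setup is right, and the single-model step (build an $(M,\mathcal{P})$-generic sequence below $p$ that forces a $<_T$-increasing chain $t_0<_T t_1<_T\cdots$ in $\dot A$ with heights cofinal in $\delta$, then use $M\cap\mathcal{B}(T)\in\Omega(T)$ to place the resulting level-$\delta$ node on some $b\in M\cap\mathcal{B}(T)$) is exactly the heart of the matter. The gap is in what you do next. You propose to iterate this through a continuous chain of models of length $\omega_1$ so as to produce a single condition forcing cofinally many $b(\alpha)\in\dot A$. But $\Omega(T)$-completeness only gives lower bounds for \emph{countable} generic sequences; it says nothing about descending sequences of length $\omega_1$. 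Your own phrase ``assembling countably many good stages cofinal in $\omega_1$'' already reveals the problem: $\omega_1$ has uncountable cofinality, so countably many stages cannot be cofinal, and there is no mechanism in the hypotheses for producing a lower bound at stage $\omega_1$. This is not a bookkeeping issue that can be finessed; the approach simply does not close.

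The fix is that no iteration is needed: the contradiction already lives inside the single model $M$. While building your $(M,\mathcal{P})$-generic sequence $\langle p_n\rangle$, interleave one more task. Since $p$ forces $\dot A$ Aronszajn, for every $c\in\mathcal{B}(T)$ the set of $r\le p$ deciding some $\alpha$ with $r\Vdash c(\alpha)\notin\dot A$ is dense and lies in $M$ whenever $c\in M$. Genericity then guarantees that for each $c\in M\cap\mathcal{B}(T)$ some $p_n$ decides such an $\alpha_c$, and necessarily $\alpha_c<\delta$. Now let $q$ be the lower bound and let $b\in M\cap\mathcal{B}(T)$ be the branch through the level-$\delta$ node determined by your chain $\langle t_n\rangle$. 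On one hand $q\Vdash b(\alpha)\in\dot A$ for every $\alpha<\delta$ by downward closure; on the other hand $q\le p_n\Vdash b(\alpha_b)\notin\dot A$ with $\alpha_b<\delta$. That is the contradiction, obtained from one model and one application of $\Omega(T)$-completeness.
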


\begin{lem}[\cite{second}] \label{No New branch}
 Assume $T$ is an $\omega_1$-tree, $X$ is an uncountable set, 
$S \subset [X]^\omega$ is stationary, and 
$\mathcal{P}$ is an $S$-complete forcing. Then $\mathcal{P}$ does not add new cofinal branches to $T$.
 \end{lem}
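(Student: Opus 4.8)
The plan is to argue by contradiction, combining a tree-of-conditions construction with the lower bounds furnished by $S$-completeness, and to reach a contradiction against the countability of a single level of $T$. Suppose toward a contradiction that some $p \in \mathcal{P}$ forces that $\dot b$ is a cofinal branch of $T$ with $\dot b \notin \mathbf{V}$. The first step is to record a splitting fact: for every $q \leq p$ and every $\alpha < \omega_1$ there are $q_0, q_1 \leq q$, a level $\beta \geq \alpha$, and distinct $s_0, s_1 \in T(\beta)$ with $q_0 \Vdash \dot b(\beta) = s_0$ and $q_1 \Vdash \dot b(\beta) = s_1$. Indeed, if some $q \leq p$ failed to split above some $\alpha$, then for every $\beta \geq \alpha$ all extensions of $q$ deciding $\dot b(\beta)$ would decide it to be the same value; collecting these common values and closing downward would define a cofinal branch $c \in \mathbf{V}$ with $q \Vdash \dot b = \check{c}$, contradicting that $\dot b$ is new.

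Next I would fix a suitable model. Using stationarity of $S$ in $[X]^\omega$, choose a countable $M \prec H_\theta$ with $\{\mathcal{P}, T, p, \dot b\} \subseteq M$, with $M$ suitable for $X, \mathcal{P}$, and with $M \cap X \in S$; set $\delta = M \cap \omega_1$. Enumerate the dense subsets of $\mathcal{P}$ that lie in $M$ as $\langle D_n : n < \omega \rangle$. I then construct a system $\langle p_s : s \in 2^{<\omega} \rangle$ of conditions in $M$ with $p_\emptyset \leq p$ and $p_{s\concat i} \leq p_s$, arranged so that along every $x \in 2^\omega$ the sequence $\langle p_{x\rest n} : n < \omega \rangle$ is $(M,\mathcal{P})$-generic while distinct branches split. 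At stage $n$, for each $s$ with $|s| = n$, I first apply the splitting fact inside $M$ to obtain $r_0, r_1 \leq p_s$ in $M$ deciding $\dot b(\beta_s)$ to be two distinct values; by elementarity these values lie in $T \cap M$, so $\beta_s < \delta$. I then extend each $r_i$ into $D_n \cap M$ to define $p_{s\concat i}$, which still decides $\dot b(\beta_s)$ as $r_i$ did. Thus each branch meets $D_n$ at level $n+1$ and retains the split recorded at each node.

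Now I would invoke $S$-completeness: since $M \cap X \in S$, each generic sequence $\langle p_{x\rest n}\rangle$ has a lower bound $q_x$. Because $\dot b \in M$ and every $\beta < \delta$ lies in $M$, the dense set of conditions deciding $\dot b(\beta)$ belongs to $M$, so $q_x$ decides $\dot b(\beta)$ for all $\beta < \delta$; let $b_x$ be the resulting chain of forced values, a branch through $T \rest \delta$. For distinct $x, y \in 2^\omega$, inspecting their longest common initial segment $s$ shows that $q_x$ and $q_y$ force different values of $\dot b(\beta_s)$, so $b_x \neq b_y$. Since $q_x \Vdash \dot b(\delta) \in T(\delta)$ and the predecessors of $\dot b(\delta)$ are exactly $b_x$, absoluteness gives, in $\mathbf{V}$, an element $t_x \in T(\delta)$ whose set of predecessors is $b_x$; distinct $b_x$ yield distinct $t_x$. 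This embeds $2^\omega$ into the countable level $T(\delta)$, the desired contradiction.

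The main obstacle is the bookkeeping in the tree-of-conditions construction: I must simultaneously guarantee that every one of the continuum-many branches produces an $(M,\mathcal{P})$-generic sequence and that any two branches split, while keeping all the $p_s$ inside $M$ so that elementarity forces the splitting levels $\beta_s$ below $\delta$. Once that is in place, $S$-completeness does the essential work by supplying the lower bounds $q_x$, and the final contradiction is a pigeonhole against $|T(\delta)| \leq \aleph_0$; the splitting fact and the claim that each $q_x$ captures a complete chain below $\delta$ are then routine genericity-and-elementarity verifications.
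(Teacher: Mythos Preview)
The paper does not prove this lemma; it is simply quoted from \cite{second} without argument. Your proof is correct and follows the standard tree-of-conditions method for showing that proper-like forcings add no new branches: build a perfect tree of conditions inside a suitable $M$ with $M\cap X\in S$, interleaving the splitting step with meeting the dense sets of $M$, use $S$-completeness to obtain lower bounds $q_x$ along every branch, and conclude that the continuum-many distinct chains $b_x$ through $T\rest\delta$ each extend to some $t_x\in T(\delta)$, contradicting $|T(\delta)|\le\aleph_0$. The absoluteness step producing $t_x$ in $\mathbf{V}$ is legitimate since $T$ and its order lie in the ground model, and distinct $b_x$ automatically yield distinct $t_x$ because they disagree at some level below $\delta$.
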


\begin{defn}[\cite{second}] \label{S-cic}
Assume $S,X$ are as above and $\kappa$ is a regular cardinal. 
We say that $\mathcal{P}$ satisfies the \emph{$S$-closedness isomorphism condition for 
$\kappa$},
or $\mathcal{P}$ has the \emph{$S$-cic for $\kappa$},  if whenever
\begin{itemize}
\item
$M,N$ are suitable models for $\mathcal{P}$,
\item
both $M \cap X,$ $ N\cap X$ are in $S$,
\item
$h:M\rightarrow N$ is an isomorphism such that $h\upharpoonright (M\cap N) =\mathrm{ id}_{(M \cap N)}$,
\item
$\min((N \setminus M)\cap \kappa)> \sup(M\cap \kappa)$, and 
\item
$\langle p_n: n\in \omega \rangle $ is an $(M,\mathcal{P})$-generic sequence,
\end{itemize} 
then there is a common lower bound $q \in \mathcal{P}$ for
$\langle p_n: n\in \omega \rangle $ and $\langle h(p_n): n\in \omega \rangle $.
\end{defn}

\begin{defn}\label{Q}
Assume $|\Lambda| =\aleph_1$.
Let $Q$ be the forcing notion consisting of all
$p=(T_p, b_p)$ such that:
\begin{itemize}
\item $T_p$ is a lexicographically ordered countable tree of height 
$\alpha_p +1$ whose underlying set is a subset of $\Lambda$,
\item if $\nu \in \lim(\omega_1)$  and $s \neq t$ are in $T_p(\nu)$
then $s,t$ have different sets of predecessors, 
\item for all $t \in T_p$ there is $s \in T_p(\alpha_p)$ such that $t\leq_{T_p} s$,
\item if $t \in T_p$ and $\Ht(t)< \alpha_p$ then 
$(\{ s \in T_p(\Ht(t) +1): t <_{T_p}s \}, <_{\lex})$ is a countable dense linear order, and
\item $b_p$ is a countable partial function from $\omega_2$ to $T_p(\alpha_p)$.
\end{itemize}
We let $p \leq q$ if 
\begin{itemize}
\item $T_p \rest \alpha_q = T_q$,
\item $\dom(b_p) \supset \dom(b_q)$, and
\item $b_q(\xi) \leq b_p(\xi)$ for all $\xi \in \dom(b_q)$.
\end{itemize}
\end{defn}

It is well known that $Q$ is countably closed.
Moreover, if $\CH$ holds then $Q$ has the $\aleph_2$-cc.
Assume $\CH$ holds in $\textsc{V}$ and let $G$ be $\textsc{V}$-generic for $Q$.
Then $G$ introduces  an $\omega_1$-tree $T =\bigcup_{q \in G} T_q$ together 
with $\aleph_2$-many cofinal branches  $b_\xi:=\{b_q(\xi): q \in G \}$ for $\xi \in \omega_2$.
Note that $T$ has no Aronszajn subtree and $\Seq{b_\xi : \xi \in \omega_2}$
enumerates the set of all cofinal branches of  $T$ in $\textsc{V}[G]$.
Conversely, $G$ is uniquely characterized by $T, <_{\lex},$ and $\Seq{b_\xi: \xi \in \omega_2}$.
If $b \in \Bcal(T)$ let $\iota(b)$ be the $\xi \in \omega_2$ such that $b = b_\xi$.

\section{proof of theorems}
\begin{defn}
Assume $\pphi$ is a partial function from $\omega_2$ to $\omega_2$ and 
$C \subset \omega_2$ is a club.
We say 
$\pphi$ respects $C$ if for all $\alpha \in C$ and $\xi \in \dom(\pphi)$, 
$$\xi \in \alpha \longleftrightarrow \pphi(\xi) \in \alpha.$$
\end{defn}
\begin{defn}\label{fast}
Assume 
$X \in H_{(2^{\omega_2})^+}$. 
A club $C \subset \omega_2$ is said to be fast for $X$
if there are a regular cardinal $\lambda > (2^{\omega_2})^+$
and a continuous $\in$-chain 
$\Seq{M_\xi : \xi \in \theta}$
of  elementary submodels of $H_\lambda$
such that:
\begin{itemize}
\item$X \in M_0$, 
\item $|M_\xi|= \aleph_1$ for all $\xi \in \omega_2$,
\item $\xi \cup \omega_1 \subset M_\xi$ for all $\xi \in \omega_2$, 
\item $\Seq{M_\eta: \eta \in \xi} \in M_{\xi+1}$ for all $\xi \in \omega_2$, and
\item $C = \{ M_\xi \cap \omega_2 : \xi \in \omega_2\}$.
\end{itemize} 

\end{defn}
\begin{defn}\label{F}
Let $\kappa \in  \{\omega_1, \omega_2 \}$ and let
$T, <_{\lex},\Seq{b_\xi: \xi \in \omega_2}$ be the lexicographically ordered tree with the enumeration of its cofinal branches that is introduced by the generic filter of $Q$.
Let $X,Y$ be subsets of $\Bcal(T)$ such that: 
\begin{itemize}
\item[(a)]  for all $t \in U:= \bigcup X$, $|\Bcal(U_t) \cap X| = \kappa$ and $|\Bcal(U_t) \setminus X| = \omega_2$,
\item[(b)]  for all $t \in V:= \bigcup Y$, $|\Bcal(V_t) \cap Y| = \kappa$ and $|\Bcal(V_t) \setminus Y| = \omega_2$,
\item[(c)] $U$ and $V$ have no local end points.
\end{itemize}
Assume $C \subset \omega_2$ is a club which is fast for $T, X, Y$ and 
$\Seq{b_\xi : \xi \in \omega_2}$.
$\Fcal_{X,Y}(T)$ is the poset consisting of all 
$p=(f_p, \psi_p)$
such that:
\begin{enumerate}
\item The function $f_p: U\rest A_p \longrightarrow V\rest A_p$ is a 
$<_{\lex}$-preserving level preserving tree isomorphism, 
where $A_p \subset \omega_1$ is countable and closed with 
$\max(A_p) = \alpha_p$.  
\item \label{1-1} $\psi_p$ is a countable partial one-to-one function from $\iota[\Bcal(U)]$ to $\iota[ \Bcal(V)]$ such that $\pphi_p=\iota^{-1} \circ \psi_p \circ  \iota $ preserves $<_{\lex}$.
\item\label{respect}  The map $\psi_p$ respects the club $C$. 
\item\label{restriction} For all $b \in \dom(\pphi_p)$, $b \in X$ if and only if $\pphi(b) \in Y$.
\item\label{finite} For all $t \in T(\alpha_p)$ there are at most finitely many 
$b \in \dom(\pphi_p)$ with $t \in b$.
\item\label{con} For all $b \in \dom(\pphi_p)$, 
$f_p(b(\alpha_p))= [\pphi_p(b)](\alpha_p)$.
\end{enumerate}
We let $q \leq p$ if $A_q \cap \alpha_p = A_p$, $f_q \supset f_p$, and 
$\pphi_q \supset \pphi_p$. We write $\Fcal_{X,Y}$ or just  $\Fcal$ instead of $\Fcal_{X,Y}(T)$
if there is no ambiguity.
\end{defn}

It is obvious from Definition \ref{F} that if $q \leq p$ then $(f_q, \psi_p) \leq p$.
We sometimes use this fact without mentioning it.
It is also obvious that every condition $p$ is uniquely determined 
by $f_p$ and $\pphi_p$. 
In order to refer to a condition $p \in \Fcal$
we sometimes abuse the notations and use 
$\pphi_p$ -- which is a map between the set of branches --
instead of $\psi_p$ 
which is a map between ordinals.
\begin{lem}\label{height}
Let $T,X,Y,U,V,\Fcal$ be as in Definition \ref{F} and
$\alpha \in \omega_1.$ 
Let $D(\alpha)$ be the set of all conditions  $q \in \Fcal$ with $\alpha \leq \alpha_q$. 
Then $D(\alpha)$ is a dense subset of $\Fcal$.
\end{lem}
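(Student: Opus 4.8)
The plan is to prove that every condition $p\in\Fcal$ has an extension lying in $D(\alpha)$. If $\alpha\le\alpha_p$ there is nothing to do, so assume $\alpha_p<\alpha$. I would leave the branch part of the condition untouched, setting $\psi_q=\psi_p$, and extend $f_p$ upward by a single jump to one carefully chosen level $\delta$, taking $A_q=A_p\cup\{\delta\}$. Jumping directly to $\delta$ rather than climbing level by level is the whole point: it means I never have to build $f$ through an intermediate limit level, and so never have to worry about which branches of $U$ and $V$ happen to be realized there.

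The level $\delta$ is chosen as follows. Fix a regular $\theta>(2^{\omega_2})^+$ and a countable $M\prec H_\theta$ with $\{p,T,U,V,\Fcal\}\in M$ and $\delta:=M\cap\omega_1\ge\alpha$; such $M$ exist because the set of possible values of $M\cap\omega_1$ is club in $\omega_1$. Two consequences of this choice carry the argument. First, the hypotheses of Lemma \ref{rational_copy} hold at $\delta$: $T$ is the generic tree and $U,V$ are downward closed, pruned, and free of local end points by clause (c), so for every $u\in U(\alpha_p)$ with image $v=f_p(u)$ the orders $(U_u(\delta),<_\lex)$ and $(V_v(\delta),<_\lex)$ are isomorphic to the rationals; note $u,v\in M$ since $T(\alpha_p)$ and $f_p$ are countable members of $M$. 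Second, since $p\in M$ every branch in the countable set $\dom(\pphi_p)$ lies in $M$, whence all the splitting levels $\Delta(b,b')$ and $\Delta(\pphi_p(b),\pphi_p(b'))$ for $b,b'\in\dom(\pphi_p)$ belong to $M$ and therefore lie strictly below $\delta$.

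Now I would define $f_q$ on level $\delta$ one cone at a time. Fix $u\in U(\alpha_p)$ and $v=f_p(u)$. By clause \eqref{finite} only finitely many $b\in\dom(\pphi_p)$ run through $u$, and clause \eqref{con} forces $g_u$ to send each such $b(\delta)$ to $[\pphi_p(b)](\delta)$. Because $\delta$ lies above every relevant splitting level, distinct such branches have distinct values $b(\delta)$ and distinct images, and by clause \eqref{1-1} the required assignment preserves $<_\lex$; hence it is a finite partial order isomorphism between two copies of the rationals, which by Cantor's theorem extends to a full order isomorphism $g_u\colon(U_u(\delta),<_\lex)\to(V_v(\delta),<_\lex)$. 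Put $f_q=f_p\cup\bigcup_{u\in U(\alpha_p)}g_u$, $\psi_q=\psi_p$, and $A_q=A_p\cup\{\delta\}$. One readily checks that $f_q$ is a level- and $<_\lex$-preserving tree isomorphism of $U\rest A_q$ onto $V\rest A_q$: it is a bijection because $f_p$ is a bijection on level $\alpha_p$ and each $g_u$ is a bijection, and it preserves $<_\lex$ within a cone by $g_u$ and across cones by $f_p$. Clauses \eqref{1-1}--\eqref{finite} persist because $\psi_p$ was not altered (for \eqref{finite}, the branches through a node of level $\delta$ are among those through its predecessor in $U(\alpha_p)$), and clause \eqref{con} holds by construction of the $g_u$. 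Thus $q:=(f_q,\psi_q)\le p$ and $\alpha_q=\delta\ge\alpha$, so $q\in D(\alpha)$. The single delicate point, and the reason for choosing $\delta$ above all splitting levels, is making the coherence requirement \eqref{con} simultaneously satisfiable: only then are the finitely many constraints on each $g_u$ mutually order-consistent, so that Cantor's theorem may be applied.
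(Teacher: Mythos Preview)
Your proof is correct and follows essentially the same route as the paper: leave $\psi_p$ untouched, pick a single high level at which each cone of $U$ and $V$ above $U(\alpha_p)$, $V(\alpha_p)$ is order-isomorphic to the rationals (via Lemma~\ref{rational_copy}), and extend $f_p$ cone-by-cone using Cantor's theorem together with clauses~\ref{1-1}, \ref{finite}, \ref{con}. The only cosmetic difference is that you explicitly take $\delta=M\cap\omega_1$ for a suitable countable $M\prec H_\theta$, whereas the paper simply asserts the existence of such a level $\beta\ge\alpha$ by appeal to Lemma~\ref{rational_copy}; since that lemma is itself stated in terms of an elementary submodel, the two presentations amount to the same thing.
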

\begin{proof}
Let $\alpha \in \omega_1$ and $ p \in \Fcal$. 
Without loss of 
generality assume that for all $b,b' \in \dom(\pphi_p) \cup \range(\pphi_p)$, $\Delta (b,b') < \alpha$.
Condition (c) of Definition \ref{F} and Lemma \ref{rational_copy} imply that there is $\beta \in \omega_1 \setminus \alpha$ such that for all 
$t \in U(\alpha_p) \cup V(\alpha_p)$,
$(U_t(\beta), <_{\lex})$ and  $(V_t(\gamma), <_{\lex})$ are isomorphic to the rationals.

For every $t \in U(\alpha_p)$ we let $g_t : U_t(\beta) \longrightarrow V_{f_p(t)}(\beta)$ be a $<_{\lex}$-isomorphism such that 
if $b \in \dom(\pphi_p)$ and $t \in b$ then $g_t(b(\beta))=[\pphi_p(b)](\beta)$. 
By Condition \ref{con} of Definition \ref{F}, $[\pphi_p(b)](\beta) \in V_{f_p(t)}$. 
By Condition \ref{1-1}, 
the map $b(\beta) \mapsto [\pphi_p(b)](\beta)$
is order preserving.
By Condition \ref{finite}, the  sets $\{b(\beta): b \in \dom(\pphi_p) \wedge t \in b\}$
and $\{[\pphi_p(b)](\beta): b \in \dom(\pphi_p) \wedge t \in b\}$ are finite for every $t \in U_{\alpha_p}$.
Therefore, the function $g_t$ exists for every $t \in U(\alpha_p)$. 

Now define $f = \bigcup \{g_t: t \in U(\alpha_p)\}$, $A_q= A_p \cup \{ \beta \}$, 
$\pphi_q = \pphi_p$ and $f_q=f_p \cup f$.
Then  $f_q: U \rest A_q \longrightarrow V\rest A_q$ is a $<_{\lex}$-preserving level preserving tree isomorphism. 
It is easy to see that the rest of the conditions of Definition \ref{F} holds for $q=(f_q, \pphi_q)$ and it extends $p$.
\end{proof}

Lemma \ref{height} shows that $\Fcal_{X,Y}$ adds a club isomorphism from $\bigcup X$ to  $\bigcup Y$,
provided that $\omega_1$ is preserved.

\begin{lem}\label{branchDensity}
Let $T, X,Y,U,V,\Fcal$ be as in Definition \ref{F}.
Let $\eta \in \omega_2$ and $b_\eta \subset U$ $(b_\eta \subset V)$. 
Let $D_\eta $ $(D^\eta)$be the collection of all conditions $q \in \Fcal$ such that $b_\eta \in \dom(\pphi_q)$ $(b_\eta \in \range(\pphi_q))$.
Then $D_\eta$ $(D^\eta)$ is a dense subset of $\Fcal.$
\end{lem}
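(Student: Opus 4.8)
The plan is to establish density of $D_\eta$; density of $D^\eta$ then follows by the symmetric argument obtained by interchanging $(U,X)$ with $(V,Y)$ and domains with ranges. Fix $p \in \Fcal$ and assume $b_\eta \subset U$ with $b_\eta \notin \dom(\pphi_p)$ (otherwise $p \in D_\eta$ already). Building $q \le p$ with $b_\eta \in \dom(\pphi_q)$ reduces to choosing one branch $b' \in \Bcal(V)$ to serve as $\pphi_q(b_\eta)$ and checking that $\pphi_q = \pphi_p \cup \{(b_\eta, b')\}$ meets Definition \ref{F}. First I would normalize $p$: by Lemma \ref{height} extend it to $p'$, with $\pphi_{p'} = \pphi_p$, whose height $\alpha := \alpha_{p'}$ exceeds $\Delta(b,b')$ for all $b,b' \in \dom(\pphi_p) \cup \{b_\eta\}$ (only countably many values, all below $\omega_1$). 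Then the branches of $\dom(\pphi_{p'}) \cup \{b_\eta\}$ are pairwise separated below $\alpha$. Set $t = b_\eta(\alpha)$ and $t' = f_{p'}(t) \in V(\alpha)$; clause \ref{con} forces $\pphi_q(b_\eta)(\alpha) = t'$, so $b'$ must be chosen in $\Bcal(V_{t'})$.

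Next I would identify the admissible indices. Let $\Seq{M_\xi : \xi \in \omega_2}$ and $C = \{\gamma_\xi : \xi \in \omega_2\}$, with $\gamma_\xi = M_\xi \cap \omega_2$, witness that $C$ is fast for $T, X, Y, \Seq{b_\xi : \xi \in \omega_2}$; recall each $\gamma_\xi$ is a genuine ordinal and $M_\xi$ contains every ordinal below it, while $T \subseteq \omega_1 \subseteq M_\xi$ puts every node of $V$ (in particular $t'$) in every $M_\xi$. Clause \ref{respect} forces $\iota(b')$ to lie on the same side as $\eta$ of every club point, i.e. $\iota(b') \in [\gamma^-, \gamma)$ with $\gamma = \min(C \setminus (\eta+1))$ and $\gamma^- = \sup(C \cap (\eta+1))$; since $C$ enumerates the $\gamma_\xi$ increasingly and continuously, $\gamma$ is not a limit point of $C$, so $\gamma^-$ is the immediate $C$-predecessor of $\gamma$ (with $\gamma^- = 0$ when $\gamma = \gamma_0$) and $\eta \in [\gamma^-, \gamma)$. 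Clause \ref{restriction} forces $b' \in Y$ exactly when $b_\eta \in X$; write $Z$ for the required side ($Z = Y$ if $b_\eta \in X$, else $Z = \Bcal(V) \setminus Y$).

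The crux is to show that $W := \{b \in \Bcal(V_{t'}) \cap Z : \iota(b) \in [\gamma^-, \gamma)\}$ is uncountable; since $\range(\pphi_{p'})$ is countable I may then take $b' \in W \setminus \range(\pphi_{p'})$. Let $N$ be the model of the filtration with $N \cap \omega_2 = \gamma$, so $\omega_1 \subseteq N$ and $N$ contains $t', V, Y, \iota$, $\gamma^-$ and all ordinals below $\gamma$. If $\Bcal(V_{t'}) \cap Z$ has size $\aleph_2$ — which holds for $Z = \Bcal(V) \setminus Y$ by (b), and for $Z = Y$ when $\kappa = \omega_2$ — then $S := \{b \in \Bcal(V_{t'}) \cap Z : \iota(b) \ge \gamma^-\}$ also has size $\aleph_2$, because at most $\aleph_1$ of the branches have index below $\gamma^-$; as $S \in N$ and $\omega_1 \subseteq N$, there are $\aleph_1$ elements of $S$ in $N$, each with index in $N \cap \omega_2 = \gamma$ and $\ge \gamma^-$, whence $|W| = \aleph_1$. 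The remaining case is $Z = Y$ and $\kappa = \omega_1$. Choosing any $s \in b_\eta$, the set $\Bcal(U_s) \cap X \in M_0$ has size $\kappa = \aleph_1$, so it is absorbed into $M_0$; hence $b_\eta \in M_0$ and $\eta < \gamma_0$, giving $\gamma^- = 0$ and $\gamma = \gamma_0$. The same absorption applied to $\Bcal(V_{t'}) \cap Y \in M_0$ (again of size $\aleph_1$) places all of its uncountably many elements inside $M_0$, i.e. with index $< \gamma_0$, so they all lie in $W$.

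Finally I would let $A_q = A_{p'}$, $f_q = f_{p'}$, $\pphi_q = \pphi_{p'} \cup \{(b_\eta, b')\}$ and verify Definition \ref{F}: clause (1) and clause \ref{con} are immediate since $b'(\alpha) = t' = f_{p'}(t)$; clause \ref{respect} holds by $\iota(b') \in [\gamma^-, \gamma)$; clause \ref{restriction} holds since $b' \in Z$; clause \ref{finite} holds because the separation below $\alpha$ leaves at most one old branch plus $b_\eta$ through any node of $T(\alpha)$; and clause \ref{1-1} holds because $b' \notin \range(\pphi_{p'})$ and, as $f_{p'}$ preserves $<_{\lex}$ and $t \ne b(\alpha)$ for every $b \in \dom(\pphi_{p'})$, the node $t'$ sits among the $f_{p'}(b(\alpha)) = \pphi_{p'}(b)(\alpha)$ exactly as $t$ sits among the $b(\alpha)$, so the added pair preserves $<_{\lex}$. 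I expect the uncountability of $W$ to be the main obstacle: it is the unique place where the hypotheses (a), (b) and the fastness of $C$ must be combined, and where the dichotomy between the $\aleph_2$-side (handled by reflection below the relevant club point) and the $\aleph_1$-side (handled by the absorption of an $\aleph_1$-sized cone of designated branches into $M_0$, which simultaneously drives $\eta$ below $\gamma_0$) appears to be unavoidable.
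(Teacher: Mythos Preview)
Your proof is correct and follows essentially the same route as the paper: normalize via Lemma~\ref{height}, locate the $C$-interval $[\gamma^-,\gamma)$ containing $\eta$ (equivalently, the least model $M_\mu$ of the fast filtration with $\eta\in M_\mu$), and use conditions (a),(b) together with $\omega_1\subseteq M_\mu$ to find a target branch $b'\in\Bcal(V_{t'})$ on the correct side of $Y$ with index in that interval. The only notable variations are cosmetic: you separate only $\dom(\pphi_p)\cup\{b_\eta\}$ (the paper also separates the range, which it then uses to argue that no range branch passes through $t'$, whereas you avoid the range by producing $\aleph_1$ candidates and discarding countably many), and your case split is on $\kappa$ rather than on membership in $X$, but the two case analyses coincide since $\kappa=\omega_1$ forces $X\subseteq M_0$ exactly as you observe.
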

\begin{proof}
Due to similarity we only show $D_\eta$ is dense. 
Fix $p \in \Fcal.$
Let $C \subset \omega_2$ be the club which is respected by the second coordinates of conditions in $\Fcal.$
Let $\Seq{M_\xi : \xi \in \omega_2}$ be as in Definition \ref{fast} which witness that $C$ is fast for $T,X,Y, \Seq{b_\xi: \xi \in \omega_2}$.

By Lemma \ref{height} there is $q \leq p$ such that $\pphi_q = \pphi_p$ and
$\alpha_q = \alpha$ is above $ \sup \{\Delta (b_\xi, b_\zeta) \in \omega_1: \{b_\xi, b_\zeta\} \subset \{ b_\eta\}\cup \dom(\pphi_p) \cup \range (\pphi_p) \}$.
In particular for every $t \in U(\alpha) \cup V(\alpha)$
there is at most one $b \in \{b_\eta\}\cup \dom(\pphi_q) \cup \range (\pphi_q) $ with $t \in b$.
We find an extension $q' \leq q$ with $f_{q'}=f_q$ and $\dom(\pphi_p) \cup \{b_\eta\} = \dom(\pphi_{q'})$.
Let $t = b_\eta (\alpha)$.
Let $\mu$ be the smallest ordinal with $\eta \in M_{\mu}$.
Then $\mu$ is a successor ordinal or $\mu = 0$.

Let $b_\nu \in \Bcal(V_{f_q(t)}) \cap (M_\mu \setminus \bigcup_{\rho \in \mu}M_\rho)$ such that $\eta \in X$ iff $\nu \in Y$.
In order to see there is always such a $\nu$, first assume that $\eta \in X$. Then $ \mu \in \kappa$.
Condition (b) of Definition \ref{F} and elementarity imply that 
there is $b_\nu \in (Y \cap M_\mu) \setminus \bigcup_{\rho \in \mu} M_\rho$ such that $f_q(t) \in b_\nu$, as desired.
Now assume $\eta \notin X$.
Let $Y'= (\Bcal(V)\setminus Y) \cap (M_\mu \setminus \bigcup_{\rho \in \mu}M_\rho)$.
By elementarity of $M_\mu$ and Condition (b)
 we can find $b_\nu \in Y'$ such that 
$b_\nu \in \Bcal(V_{f_q(t)})$, as desired.

Define $q'$ by $\pphi_{q'}= \pphi_q \cup \{(\eta, \nu) \}$ and $f_{q'} = f_q$.
The way we chose $\nu$, makes \ref{respect}, \ref{restriction}, \ref{finite}, and \ref{con} obvious for $q'$.
Note that there is no $b \in \range(\pphi_q)$ with $f_q(t) \in b$.
(Assume for a contradiction that there is such a $b$.
Then by \ref{con} of Definition \ref{F}, $t \in [\pphi_q^{-1}(b)] \cap b_\eta$ which contradicts the choice of $q, \alpha_q$.)
In particular, $\pphi_{q'}$ is one-to-one. 
Moreover, for every $t \in U(\alpha) \cup V(\alpha)$
there is at most one $b \in \dom(\pphi_{q'}) \cup \range (\pphi_{q'}) $ with $t \in b$.
Since $f_q=f_{q'}$ preserves $<_{\lex}$ and \ref{con} holds for $q'$, property \ref{1-1} holds for $q'$.
So $q' \in \Fcal$ and it is an extension of $p$.
\end{proof}
Provided that $\omega_1$ is preserved and no new cofinal branch is added, Lemma \ref{branchDensity} asserts the following.
Let $G$ be the generic filter for $\Fcal_{X,Y}$ and $\pphi=\bigcup \{\pphi_p: p \in G \}$. Then both maps 
$\pphi \rest X : X \longrightarrow Y$ and 
 $\pphi : \Bcal(\bigcup X) \longrightarrow \Bcal(\bigcup Y)$ are $<_{\lex}$-isomorphisms. 
 
 \begin{lem}\label{F_omega}
Assume $T,X,Y,U,V,\Fcal$ are as in Definition \ref{F}. 
Then $\Fcal$ is $\Omega(T)$-complete. 
\end{lem}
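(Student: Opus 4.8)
The plan is to produce, for every suitable $M$ with $M \cap \Bcal(T) \in \Omega(T)$ and every descending $(M,\Fcal)$-generic sequence $\Seq{p_n : n \in \omega}$, an explicit lower bound $q$. Write $\delta := M \cap \omega_1$ and note that every parameter defining $\Fcal$ (in particular $T$, $X$, $Y$, $U$, $V$, the club $C$, the enumeration $\Seq{b_\xi : \xi \in \omega_2}$ and $\iota$) lies in $M$. I would set $B := \bigcup_n A_{p_n}$, $f := \bigcup_n f_{p_n}$, $\pphi_q := \bigcup_n \pphi_{p_n}$, and aim for $q = (f_q,\pphi_q)$ with $A_q := B \cup \{\delta\}$ and $\alpha_q := \delta$, where $f_q$ extends $f$ by a map on level $\delta$ still to be defined. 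Genericity against the dense sets of Lemmas \ref{height} and \ref{branchDensity} pins these objects down: meeting $D(\alpha)$ for $\alpha \in M \cap \omega_1$ gives $\sup_n \alpha_{p_n} = \delta$, so $B$ is cofinal in $\delta$ and, by the nesting $A_{p_m} = A_{p_{m'}} \cap \alpha_{p_m}$ for $m \le m'$, closed; meeting $D_\eta$ and $D^\eta$ for $\eta \in M$ gives $\dom(\pphi_q) = M \cap \Bcal(U)$ and $\range(\pphi_q) = M \cap \Bcal(V)$, the inclusions $\subseteq$ holding because each $p_n \in M$ forces the countable sets $\dom(\pphi_{p_n})$, $\range(\pphi_{p_n})$ to lie in $M$ and hence to be subsets of $M$.

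The heart of the argument is the definition of $f_q$ on level $\delta$, for which I would record three facts. (i) Distinct $b, b' \in M \cap \Bcal(T)$ satisfy $\Delta(b,b') \in M \cap \omega_1 = \delta$, so $b \mapsto b(\delta)$ is injective on $M \cap \Bcal(T)$; in particular at most one branch of $M$ passes through each node of $T(\delta)$, which makes Condition \ref{finite} automatic at height $\delta$. (ii) Using $M \cap \Bcal(T) \in \Omega(T)$: given $t \in U(\delta)$ choose $\hat b \in M \cap \Bcal(T)$ with $t \in \hat b$; the set $\{\alpha < \omega_1 : \hat b(\alpha) \in U\}$ is an initial segment of $\omega_1$ lying in $M$ (as $\hat b, U \in M$) and containing $\delta$, hence equals $\omega_1$, so $\hat b \in M \cap \Bcal(U)$. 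Thus $U(\delta) = \{b(\delta) : b \in M \cap \Bcal(U)\} = \{b(\delta) : b \in \dom(\pphi_q)\}$, and symmetrically $V(\delta) = \{b(\delta) : b \in \range(\pphi_q)\}$. (iii) For $b \in \dom(\pphi_q)$ and $\alpha \in B$, Condition \ref{con} for a sufficiently large $p_n$ together with the fact that a level-preserving tree isomorphism carries the unique height-$\alpha$ predecessor of $b(\alpha_{p_n})$ to that of $[\pphi_q(b)](\alpha_{p_n})$ yields $f(b(\alpha)) = [\pphi_q(b)](\alpha)$.

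With these in hand I would define $f_q(t) := [\pphi_q(b)](\delta)$ for $t \in U(\delta)$, where $b$ is the unique branch in $\dom(\pphi_q)$ with $b(\delta) = t$. By (i) and (ii) this is well defined and total, and by (ii) for $V$ together with the bijectivity of $\pphi_q \colon M \cap \Bcal(U) \to M \cap \Bcal(V)$ it is a bijection $U(\delta) \to V(\delta)$. Fact (iii) gives $f_q(t)(\alpha) = f(t(\alpha))$ for $\alpha \in B$, so $f_q := f \cup (f_q \rest U(\delta))$ is a level-preserving tree isomorphism; and since for distinct $t,t' \in U(\delta)$ the splitting level $\Delta(t,t')$ equals $\Delta(b,b') < \delta$, the $<_{\lex}$-comparison of $t,t'$ agrees with that of $b,b'$, so $<_{\lex}$-preservation of $f_q$ follows from that of $\pphi_q$. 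The remaining clauses of Definition \ref{F} are inherited: Conditions \ref{1-1}, \ref{respect}, \ref{restriction} pass to unions, Condition \ref{finite} is (i), Condition \ref{con} holds by construction, and $A_q$ is closed of maximum $\delta$. Finally $A_q \cap \alpha_{p_n} = A_{p_n}$, $f_q \supset f_{p_n}$ and $\pphi_q \supset \pphi_{p_n}$ give $q \le p_n$ for all $n$.

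The main obstacle — and the only place where the hypothesis $M \cap \Bcal(T) \in \Omega(T)$ is genuinely used — is step (ii): one must know that the nodes of $U(\delta)$ and $V(\delta)$ are \emph{exactly} the level-$\delta$ values of branches in $M$, so that the values forced by $\pphi_q$ through Condition \ref{con} assemble into a \emph{total} and \emph{surjective} isomorphism at the new level rather than a merely partial one. Everything else is routine bookkeeping: the closure of $A_q$, the coherence of $f_q$ with $f$ below $\delta$, and the observation that the finitely-many-branches clause degenerates to at-most-one for branches lying in $M$.
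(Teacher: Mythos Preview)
Your proof is correct and follows essentially the same route as the paper's: both define the lower bound by $A_q = \{\delta\} \cup \bigcup_n A_{p_n}$, $\pphi_q = \bigcup_n \pphi_{p_n}$, and $f_q$ on level $\delta$ via $b(\delta) \mapsto [\pphi_q(b)](\delta)$, then invoke Lemmas~\ref{height} and~\ref{branchDensity} together with the $\Omega(T)$-hypothesis to see that this map is a total bijection $U(\delta)\to V(\delta)$. Your facts (i)--(iii) make explicit a few points the paper leaves implicit (injectivity of $b\mapsto b(\delta)$ on $M\cap\Bcal(T)$, the elementarity argument that $t\in U(\delta)$ forces $\hat b\subset U$, and the coherence $f(b(\alpha))=[\pphi_q(b)](\alpha)$ below $\delta$), but there is no substantive difference in strategy.
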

\begin{proof}
Assume $\lambda > (2^{2^{\omega_1}})^+$ is a regular cardinal,
$M \prec H_{\lambda}$ is countable, 
$X,Y,T$ are in $M$, 
and $M$ captures all elements of $T$.
Let $\delta = M \cap \omega_1$ and 
$\Seq{p_n: n \in \omega}$ be a strictly decreasing 
$(M, \Fcal)$-generic sequence.
Let us use $f_n, \pphi_n, A_n, \alpha_n$ instead of $f_{p_n}, \pphi_{p_n}, A_{p_n}, \alpha_{p_n}$.
Define $A_p=\{ \delta \} \cup \bigcup_{n \in \omega}A_n$, 
$\pphi_p= \bigcup_{n \in \omega} \pphi_n $,
$f = \{(b(\delta), [\pphi_p(b)](\delta)) : b \in \Bcal(U) \cap M \}$ and  
$f_p= f \cup \bigcup_{n \in \omega} f_n$.

We show $p \in \Fcal$.
Note that by Lemma \ref{height} and genericity of $\Seq{p_n: n \in \omega}$, $\delta = \sup\{ \alpha_n : n \in \omega\}$.
In order to see $\dom(f) = U(\delta)$, let $t \in U(\delta)$.
Note that 
$T(\delta)=\{b_\xi(\delta): \xi \in \omega_2 \cap M \}$,
since $M$ captures all elements of $T$.
By elementarity  there is a unique $\xi \in \omega_2 \cap M$
such that $t \in b_\xi$.
Again by elementarity, $b_\xi \subset U$.
Lemma \ref{branchDensity} and the genericity of $\Seq{p_n: n \in \omega}$
implies that $\pphi_n(b_\xi)$ is defined for some $n \in \omega$ and hence 
$f(t)$ is defined.
Similar argument shows that $f: U(\delta) \longrightarrow V(\delta)$
is a bijection.
Since each $\pphi_n$ preserves $<_{\lex}$, 
$\pphi_p$ and $f$ preserve it too. 
In order to see $f_p$ preserves the tree order let $t \in b \in \Bcal(U) \cap M$ be as  and
$s <_T t$ be in $\dom(f_p)$. Let $n \in \omega$ such that  $\Ht(s) \in A_n$ and $b \in \dom(\pphi_n)$.
By \ref{con} of Definition \ref{F},   $$f_p(s)=f_n(s) \leq_T f_n(b(\alpha_n))= [\pphi_n(b)](\alpha_n)\leq_T [\pphi_n(b)](\delta)=f_p(t).$$
The rest of the conditions of Definition \ref{F} hold trivially.
\end{proof}

\begin{defn}\label{R}
Assume $(T,<_{\lex})$ is a lexicographically ordered  Kurepa tree 
and $L \subset \Bcal(T)$ is such that $\bigcup L$ is everywhere Kurepa. 
$R_L(T)$ is the poset consisting of all $p=(Z_p, A_p)$
with the following properties.
\begin{enumerate}
\item $Z_p \subset L$ is countable nonempty and disjoint from the set of all local 
end points of $\bigcup L$.
\item $A_p$ is a countable antichain of $T$ which does not intersect $\bigcup Z_p$.
\end{enumerate}
For $p$ and $q$ in $R_L$ let $q \leq p$ if $Z_p \subset Z_q$ and $A_p \subset A_q$. We use $R_L$ instead of $R_L(T)$ if there is no ambiguity.
\end{defn}
It is obvious that the forcing notion introduced in Definition \ref{R} is countably closed.
\begin{lem}\label{Z_closed}
Let $T,L,R_L$ be as in Definition \ref{R}.
For generic $G \subset R_L$, let $Z = \bigcup_{p \in G}Z_p$ and $A= \bigcup_{p \in G}A_p$.
Then for every $b \in \Bcal(T)$, either 
$b \in  Z$ or $b \cap A \neq \emptyset$.
In particular, $Z = \Bcal(\bigcup Z)$. 
\end{lem}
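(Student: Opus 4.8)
The plan is to reduce both assertions to the single implication that, for any $R_L$-name $\dot b$ for a cofinal branch of $T$ and any condition $p$,
$$p \Vdash \dot b \cap \dot A = \emptyset \quad\Longrightarrow\quad p \Vdash \dot b \in \dot Z ,$$
which is just the contrapositive form of the stated dichotomy. Granting this, I would read off the rest as follows. First, $A$ and $\bigcup Z$ are forced disjoint: any $a\in A$ and $c\in Z$ lie in $A_r$ and $Z_r$ for a common lower bound $r\in G$, and $A_r\cap\bigcup Z_r=\emptyset$, so $a\notin c$. Since every $Z_p$ is nonempty, $\bigcup Z$ is a downward closed subtree of $T$ of height $\omega_1$, so each $b\in\Bcal(\bigcup Z)$ is automatically a cofinal branch of $T$; as $b\subseteq\bigcup Z$ is disjoint from $A$, the displayed implication yields $b\in Z$. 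The reverse inclusion $Z\subseteq\Bcal(\bigcup Z)$ is immediate, so $Z=\Bcal(\bigcup Z)$.

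To prove the implication I would argue by contradiction, assuming some $p'\le p$ forces $\dot b\cap\dot A=\emptyset$ and $\dot b\notin\dot Z$, and then use countable closure of $R_L$ to build a descending sequence $\langle p_n:n\in\omega\rangle$ below $p'$ together with strictly increasing countable ordinals $\langle\beta_n\rangle$ so that each $p_{n+1}$ decides $\dot b(\beta_n)=t_n$, with $\beta_n$ chosen above $\Ht(a)$ for every $a\in A_{p_n}$. The crucial extra ingredient is a bookkeeping of $\bigcup_n Z_{p_n}$ interleaved into the construction: for every branch $c$ that ever enters some $Z_{p_n}$ I arrange a stage where $p_{n+1}$ forces $\dot b$ to split off $c$. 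This is possible because $p_{n+1}\Vdash c\in\dot Z$ and $p_{n+1}\Vdash\dot b\notin\dot Z$ give $p_{n+1}\Vdash\dot b\neq c$, and a further extension decides a level below which they already differ. Taking a lower bound $p_\omega$, setting $\beta=\sup_n\beta_n$, and letting $s$ be the node of $T(\beta)$ above the chain $\{t_n\}$, normality of $T$ at the limit level $\beta$ makes $s$ unique and hence decided, so $p_\omega\Vdash\dot b(\beta)=s$.

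The purpose of the construction is that $s$ can be inserted into the antichain. Indeed $s$ is incomparable with every $a\in A_{p_\omega}$: we cannot have $s\le_T a$ because $\Ht(s)=\beta>\Ht(a)$, and $a<_T s$ would force $a\in\dot b\cap\dot A$, against $p'\Vdash\dot b\cap\dot A=\emptyset$. Moreover $s\notin\bigcup Z_{p_\omega}$, since every $c\in Z_{p_\omega}$ was handled by the bookkeeping and $\dot b$ is forced to have left $c$ below $\beta$, so $s=\dot b(\beta)\notin c$. Hence $q:=(Z_{p_\omega},\,A_{p_\omega}\cup\{s\})$ is a legitimate condition extending $p'$, and $q\Vdash s\in\dot A$ while $q\Vdash s=\dot b(\beta)\in\dot b$, contradicting $\dot b\cap\dot A=\emptyset$.

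I expect the interleaving step to be the real obstacle: one must force the limit node $s$ to avoid $\bigcup Z_{p_\omega}$, which fails for a naive construction because $\dot b$ may shadow branches of $Z$ through arbitrarily high countable levels, so the construction must diagonalize against all countably many branches accumulating in $Z_{p_\omega}$ during the $\omega$ steps while staying descending. For readers content with ground-model branches the argument collapses to a one-step density computation: given $p$ and $b\in\Bcal(T)\cap\mathbf{V}$ with $b\notin Z_p$ and $b\cap A_p=\emptyset$, one simply picks $t\in b$ of height above all of $A_p$ and above $\Delta(b,c)$ for every $c\in Z_p$, and adds $t$ to $A_p$; this already shows the set of conditions witnessing the dichotomy for a fixed $b$ is dense, and the fusion above is what upgrades this to \emph{all} branches of $T$ in $\mathbf{V}[G]$.
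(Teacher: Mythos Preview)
Your argument is correct, but it is substantially more elaborate than what the paper does. The paper's proof is exactly the ``one-step density computation for ground-model branches'' that you sketch in your final paragraph: since $R_L$ is countably closed (noted right after Definition~\ref{R}), it adds no new cofinal branches to the $\omega_1$-tree $T$, so every $b\in\Bcal(T)$ in $\mathbf V[G]$ already lies in $\mathbf V$. For such a fixed $b$ and any $p$ with $b\notin Z_p$, one chooses $\alpha$ above $\sup\{\Delta(b,c):c\in Z_p\}$ and above all heights in $A_p$, sets $t=b(\alpha)$, and passes to $(Z_p,A_p\cup\{t\})$. That is the entire proof.

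Your fusion against a name $\dot b$ is sound---the bookkeeping to split $\dot b$ off every $c\in\bigcup_n Z_{p_n}$ is exactly what is needed so that the limit node $s$ avoids $\bigcup Z_{p_\omega}$, and your incomparability check for $s$ with $A_{p_\omega}$ is fine---but it re-derives by hand a special case of ``$\sigma$-closed posets add no new branches to $\omega_1$-trees.'' What you present as an upgrade is in fact unnecessary once that standard fact is invoked; what your approach buys is a self-contained argument that does not cite it. One small point worth making explicit in your write-up: you should take $p_\omega=(\bigcup_n Z_{p_n},\bigcup_n A_{p_n})$ specifically, so that $Z_{p_\omega}=\bigcup_n Z_{p_n}$ and the bookkeeping really covers all of $Z_{p_\omega}$.
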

\begin{proof}
First recall that $\sigma$-closed posets do not add new cofinal 
branches to the $\omega_1$-trees of the ground model.
Fix $b \in \Bcal(T)$ and $p=(Z_p, A_p)$ in $R_L$. 
If $b \in Z_p$  then $p$ forces the 
conclusion of the lemma and we are done.
Assume $b \notin Z_p$ to see there is an extension of $p$ which 
forces the conclusion of the lemma.
Let $\alpha > \sup \{ \Delta(b , c) : c \in Z_p \} + 
\sup \{ \Ht(a) : a \in A \} $ 
be  a countable ordinal and $t = b (\alpha)$. 
Then $q=(Z_p, A_p \cup \{t \}) \leq p$
and it forces that $b \cap \dot{A} \neq \emptyset$, as desired.
\end{proof}

\begin{lem}\label{no_local_end_point}
Assume $L,Z,A, R_L$ are as in Lemma \ref{Z_closed}.
Then $\bigcup Z$ has no local end points.
\end{lem}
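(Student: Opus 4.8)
The plan is to work in $\mathbf{V}[G]$ and show, by a density argument carried out in the ground model, that no $b \in Z$ can be a local end point of $\bigcup Z$. Recall from Lemma \ref{Z_closed} that $Z = \Bcal(\bigcup Z)$, so the local end points of $\bigcup Z$ are precisely certain elements of $Z$, and the observation following the definition of local end points applies with $L$ replaced by $Z$: a branch $b \in Z$ is a local right end point of $\bigcup Z$ if and only if there is $t \in b$ with $b = \max(Z \cap \Bcal(T_t), <_{\lex})$. Hence to prove $b$ is not a local right end point it suffices to produce, for every $t \in b$, some $c \in Z$ with $t \in c$ and $b <_{\lex} c$; the left end point case is symmetric. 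The source of such witnesses is that $Z \subseteq L$ is disjoint from the local end points of $\bigcup L$, so every $b \in Z$ fails to be a local right end point of $\bigcup L$, meaning that for each $s \in b$ there is already some branch of $L$ through $s$ lying to the right of $b$.

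Fix $b \in L$ that is not a local end point of $\bigcup L$ and fix $t \in b$. I would show that
$$E_{b,t} = \{ q \in R_L : b \cap A_q \neq \emptyset \} \cup \{ q \in R_L : \exists\, c \in Z_q \ (t \in c \ \wedge\ b <_{\lex} c) \}$$
is dense, indeed open dense since both $A_q$ and $Z_q$ grow downward. Given $p \in R_L$, if $b \cap A_p \neq \emptyset$ then $p \in E_{b,t}$, so assume $b \cap A_p = \emptyset$. The key move, which disposes of the antichain $A_p$, is to climb along $b$ past $A_p$: set $\beta = \max(\Ht(t), \sup\{ \Ht(a) + 1 : a \in A_p \})$, a countable ordinal, and let $s = b(\beta)$. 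Since $b \cap A_p = \emptyset$ and every $a \in A_p$ has height $< \beta = \Ht(s)$, no $a \in A_p$ lies below $s$, so each $a \in A_p$ is incomparable to $s$; consequently every branch through $s$ is disjoint from $A_p$. Next, because $b$ is not a local right end point of $\bigcup L$, there is $c' \in L$ with $s \in c'$ and $b <_{\lex} c'$; putting $\gamma = \Delta(b,c')$ and $u = c'(\gamma)$ we have $s <_T u$ (so $t <_T u$) and $u >_{\lex} b(\gamma)$, so every branch through $u$ lies to the right of $b$, passes through $t$, and avoids $A_p$. Since $u \in \bigcup L$ and $\bigcup L$ is everywhere Kurepa, $|L \cap \Bcal(T_u)| = \aleph_2$, whereas the local end points of $\bigcup L$ number at most $\aleph_1$ by the cited observation; so I may pick $c \in L \cap \Bcal(T_u)$ that is not a local end point of $\bigcup L$. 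Then $q = (Z_p \cup \{c\}, A_p)$ is a legitimate condition refining $p$, with $c \in Z_q$ witnessing $q \in E_{b,t}$. By genericity $G$ meets each $E_{b,t}$ (there are only $\aleph_2$ of them, all in $\mathbf{V}$); for $b \in Z$ we have $b \cap A = \emptyset$ by Lemma \ref{Z_closed}, so the first alternative never holds, and we obtain for every $t \in b$ a $c \in Z$ through $t$ with $b <_{\lex} c$. Thus no $b \in Z$ is a local right end point of $\bigcup Z$, and symmetrically none is a local left end point.

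The main obstacle is the interaction of the witness with $A_p$: a naive count fails, because a single node of $A_p$ lying above the branching point $u$ could lie on $\aleph_2$ of the candidate branches and so destroy the counting. The device of first climbing along $b$ to the node $s$ above every height occurring in $A_p$ is what resolves this, since it forces all of $A_p$ to be incomparable to $s$ and hence makes every branch through $s$ automatically disjoint from $A_p$. After that, the remaining requirements, namely being to the right of $b$, passing through $t$, and avoiding the at most $\aleph_1$ local end points, are arranged by a routine count inside the everywhere-Kurepa cone $(\bigcup L)_u$, which has $\aleph_2$ branches from $L$.
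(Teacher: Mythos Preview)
Your proof is correct and follows essentially the same approach as the paper's: both arguments climb along $b$ to a height above every element of $A_p$, use that $b$ is not a local end point of $\bigcup L$ to find a branch of $L$ branching off to the right (and left) above that height, invoke the everywhere-Kurepa hypothesis together with the $\aleph_1$ bound on local end points to pick such a branch avoiding the local end points of $\bigcup L$, and adjoin it to $Z_p$. The paper packages this as a contradiction (extend a condition forcing a local end point to one that does not), while you package it as an explicit family of dense sets $E_{b,t}$; these are equivalent formulations of the same density fact. One small remark: the implication ``$b\in Z \Rightarrow b\cap A=\emptyset$'' that you cite as Lemma~\ref{Z_closed} is really a direct consequence of clause~(2) in Definition~\ref{R} together with $G$ being a filter, not of that lemma per se.
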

\begin{proof}
Assume for a contradiction that 
$p \in R_L$ and it forces that $\bigcup\dot{Z}$ has a local right end point. 
Lemma \ref{Z_closed} and the fact that $\sigma$-closed posets 
do not introduce new branches to the $\omega_1$-trees 
of the ground model we can replace $p$ by an extension if necessary 
such that for some 
$b \in Z_p$ and $t \in b$ the condition  $p$ forces that 
$b(\beta) = \max((\bigcup \dot{Z}) \cap T_t(\beta), <_{\lex})$
when $\beta \in \omega_1 \setminus \Ht(t)$.

Let $\alpha > \sup \{\Ht(a): a \in A_p \}$ be a countable ordinal.
Since 
$b$ is  not a local end point of $\bigcup L$, 
there are $b_0<_{\lex} b <_{\lex} b_1$ in $L$ such that 
$\alpha + \Ht(t) < \min \{ \Delta(b_0,b), \Delta(b_1, b) \}.$
Note that there are at most $\aleph_1$-many elements in $L$
which can be local end points of $\bigcup L$ and $\bigcup L$ is everywhere Kurepa.
So we can choose $b_0, b_1$ so that they are not local end points of $\bigcup L$.
Let $q=(Z_p \cup \{b_0, b_1 \}, A_p)$.
The way $\alpha, b_0,b_1$ are chosen guarantees that 
$A_p \cap b_i =\emptyset$ for $i \in 2$.
Hence $q$ is an extension of $p$ in $R_L$.
But $q$ forces that 
$b(\beta) \neq \max ((\bigcup \dot{Z}) \cap T_t(\beta), <_{\lex})$
when $\beta > \Delta(b_0, b_1)$, which is a contradiction.
The same argument shows that $\bigcup Z$ has no local left end points.
\end{proof}
\begin{lem}\label{Z_large}
Assume $L,Z,A, R_L$ are as in Lemma \ref{Z_closed}.
If $t \in b \in Z$ then the set $\{\beta \in \omega_2: b_\beta \in Z \cap \Bcal(T_t) \}$ is cofinal in $\omega_2$.
\end{lem}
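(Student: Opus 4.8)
The plan is to prove the lemma by a density argument carried out in the ground model $\textbf{V}$, exploiting that $R_L$ is countably closed and hence adds no new cofinal branch to the $\omega_1$-tree $T$. Recall from Lemma \ref{Z_closed} together with requirement (2) of Definition \ref{R} that a branch of $L$ belongs to $Z$ exactly when it is disjoint from $A$. First I would fix a node $t \in \bigcup L$ and an ordinal $\gamma \in \omega_2$; the case $t \notin \bigcup L$ is vacuous, since then the hypothesis $t \in b \in Z \subseteq L$ can never hold. I would then show that the set $D_{t,\gamma}$ of conditions $p \in R_L$ for which \emph{either} (i) no branch of $L$ through $t$ avoids $A_p$, \emph{or} (ii) there is $b_\beta \in Z_p$ with $\beta > \gamma$ and $t \in b_\beta$, is dense.

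Granting density, the lemma follows at once. Suppose in $\textbf{V}[G]$ we have $t \in b \in Z$; then $b \in L$, $t \in b$, and $b \cap A = \emptyset$. Choosing $p \in G \cap D_{t,\gamma}$, alternative (i) is impossible, because $b$ itself is a branch of $L$ through $t$ avoiding $A \supseteq A_p$. Hence (ii) holds, producing some $\beta > \gamma$ with $b_\beta \in Z \cap \Bcal(T_t)$. As $\gamma$ was arbitrary, $\{\beta \in \omega_2 : b_\beta \in Z \cap \Bcal(T_t)\}$ is cofinal in $\omega_2$, as required.

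To establish density, I would fix $p \in R_L$. If no branch of $L$ through $t$ avoids $A_p$, then $p \in D_{t,\gamma}$ and there is nothing to do. Otherwise fix a witness $b^* \in L$ with $t \in b^*$ and $b^* \cap A_p = \emptyset$, pick a countable ordinal $\delta_0$ above $\Ht(t)$ and above $\Ht(a)$ for every $a \in A_p$ (possible since $A_p$ is countable), and set $s^* = b^*(\delta_0) \in \bigcup L$. No element of $A_p$ lies below $s^*$, as such an element would belong to $b^*$. Since $\bigcup L$ is everywhere Kurepa, there are $\aleph_2$-many $b \in L$ with $s^* \in b$; each such $b$ passes through $t$ (because $t <_T s^*$) and avoids $A_p$ (below $\delta_0$ it agrees with $b^*$ up to $s^*$, and $A_p$ has no node of height $\geq \delta_0$). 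By the observation following the definition of local end points, at most $\aleph_1$ of these branches are local end points of $\bigcup L$, and at most $\aleph_1$ of them have index $\leq \gamma$; so I may select one, $b_\beta$, with $\beta > \gamma$ that is not a local end point. Then $q = (Z_p \cup \{b_\beta\}, A_p)$ is a condition of $R_L$ (the only new requirement, $A_p \cap b_\beta = \emptyset$, holds by construction), $q \leq p$, and $q \in D_{t,\gamma}$ via (ii).

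The main obstacle is precisely this promotion of a single avoiding branch into $\aleph_2$-many: one cannot read off abundance directly, but must climb from the witness $b^*$ to the node $s^*$ sitting above all heights occurring in $A_p$ and then invoke everywhere-Kurepaness of the cone at $s^*$. The role of alternative (i) is to absorb exactly those conditions in which $t$ has been permanently blocked — some element of $A$ lying on every branch through $t$ — where no witness $b^*$ exists; such conditions are harmless because they force the hypothesis of the lemma to fail.
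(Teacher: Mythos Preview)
Your argument is correct and uses the same density strategy as the paper, but the packaging differs in an instructive way. The paper works below a condition $p$ with the hypothetical branch already present, i.e.\ $b\in Z_p$; it then exploits that $b$ is not a local end point of $\bigcup L$ to find $b'\in L$ with $b'<_{\lex}b$ and $t\in b'$, and selects $b_\gamma\in(b',b)$ with $\gamma>\beta$ that is not a local end point. Any such $b_\gamma$ agrees with $b$ above $\Ht(t)$ up to $\Delta(b',b)$, so (once $b'$ is chosen with $\Delta(b',b)$ above all heights in $A_p$, which the paper leaves implicit) it inherits $b_\gamma\cap A_p=\emptyset$ from $b$. Your approach instead builds a disjunctive dense set $D_{t,\gamma}$ that does not reference the particular $b$ at all: you pick \emph{any} $b^*\in L$ through $t$ avoiding $A_p$, climb to a node $s^*$ above every height occurring in $A_p$, and harvest an $\aleph_2$-sized family of admissible branches directly from the everywhere-Kurepa cone at $s^*$. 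What you gain is a cleaner, fully explicit verification that the new branch is disjoint from $A_p$; what the paper's route buys is that the lex-interval $(b',b)$ and the non-local-end-point hypothesis do the work of producing $\aleph_2$ candidates without having to name a node like $s^*$.

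One small point: your sentence ``since $\bigcup L$ is everywhere Kurepa, there are $\aleph_2$-many $b\in L$ with $s^*\in b$'' strictly speaking asks for branches in $L$, not merely in $\Bcal(\bigcup L)$. This is the same tacit step the paper makes (it needs $b_\gamma\in L$ to enlarge $Z_p$), and in the intended application $L$ is an everywhere-Kurepa \emph{line}, so every cone really does meet $L$ in $\aleph_2$ many points; but it is worth flagging that this uses the hypothesis on $L$, not just on the tree $\bigcup L$.
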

\begin{proof}
Assume $\beta \in \omega_2$ and $p \in R_L$ such that $b \in Z_p$.
We show there is $\gamma \in \omega_2 \setminus \beta$ and $q \leq p$
such that $t \in b_\gamma \in Z_q$.
Since $b \in Z_p$ it is not a local end point of $\bigcup L$.
In $L$ find $b' <_{\lex} b$ with $t \in b'$. 
Since $\bigcup L$ is everywhere Kurepa
and there are at most $\aleph_1$-many elements of $\bigcup L$ that are local end points, 
there is $b_\gamma \in (b',b)$
which is not a local end point of $\bigcup L$ and $\gamma > \beta$.
Obviously $t \in b_\gamma$ and we are done.
\end{proof}
\begin{lem}
For every club $E$, $R_L$ satisfies $E$-cic for $\aleph_2$.
\end{lem}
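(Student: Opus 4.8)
The plan is to take as the common lower bound the coordinatewise union of the two sequences, and to show that the hypotheses force the $R_L$-conditions coming from $M$ and from $N$ to interact only below a common countable height, where they coincide. I would first fix $M,N,h$ and a descending $(M,R_L)$-generic sequence $\langle p_n : n\in\omega\rangle$ as in Definition \ref{S-cic}, writing $p_n=(Z_{p_n},A_{p_n})$. The crucial preliminary is that $\delta := M\cap\omega_1 = N\cap\omega_1$: since $\omega_1\in M\cap N$ and $h\rest(M\cap N)=\id$ we have $h(\omega_1)=\omega_1$, so $h$ carries $M\cap\omega_1$ order-isomorphically onto $N\cap\omega_1$, and two ordinals of the same order type are equal. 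Because each level of $T$ is countable and $T\in M\cap N$, this gives $T\cap M = T\rest\delta = T\cap N$; moreover every $T\rest(\beta+1)$ with $\beta<\delta$ is a countable set lying in both models, so $T\rest\delta\subseteq M\cap N$ and $h$ fixes $T\rest\delta$ pointwise.

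Granting this, the structure of $R_L$ makes the rest nearly automatic. Each $A_{p_n}$ is a countable antichain that is an element of $M$, so $A_{p_n}\subseteq T\cap M = T\rest\delta$ and therefore $h(A_{p_n})=A_{p_n}$; the two sequences thus carry the very same antichain $A := \bigcup_n A_{p_n}$. For the branch coordinates, each $b\in Z_{p_n}\subseteq L\cap M$ has an index $\xi=\iota(b)\in M\cap\omega_2$, and since $\iota$ is a parameter in $M\cap N$ we get $h(b)=b_{h(\xi)}$; as $h$ fixes $b\rest\delta\subseteq T\rest\delta$ pointwise, the branches $b$ and $h(b)$ agree below level $\delta$, i.e.\ $b\rest\delta = b_{h(\xi)}\rest\delta$. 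Writing $Z:=\bigcup_n Z_{p_n}$ and $Z':=\bigcup_n h(Z_{p_n})$, I would propose $q:=(Z\cup Z',\,A)$ as the common lower bound.

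The verification that $q\in R_L$ runs through the clauses of Definition \ref{R}. That $Z\cup Z'$ is a nonempty countable subset of $L$ avoiding the local end points of $\bigcup L$ is routine: each $Z_{p_n}$ has this property, and $h(Z_{p_n})\subseteq L\cap N$ inherits it because the set of local end points belongs to $M\cap N$ and is hence fixed by $h$. That $A$ is a countable antichain is clear, being an increasing union of antichains. The one substantive point is clause (2), that $A$ is disjoint from $\bigcup(Z\cup Z')$. Disjointness from $\bigcup Z$ is forced by the $p_n$ being a descending chain of conditions. For the cross term, if $a\in A$ and $b_{h(\xi)}\in Z'$ with the corresponding $b\in Z$, then $\Ht(a)<\delta$, so $a\in b_{h(\xi)}$ would give $a\in b_{h(\xi)}\rest\delta = b\rest\delta\subseteq b$, contradicting $a\notin\bigcup Z$; hence $A\cap\bigcup Z'=\emptyset$ as well. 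Since every $p_n$ and every $h(p_n)$ is refined by $q$, this yields the desired common lower bound.

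I expect the only delicate point to be the cross-term disjointness, and it is handled entirely by the agreement $b\rest\delta = h(b)\rest\delta$ below the common height $\delta$. It is worth noting in passing that neither the split hypothesis $\min((N\setminus M)\cap\omega_2)>\sup(M\cap\omega_2)$ nor the particular club $E$ enters the construction; the lower bound exists for any descending pair of sequences related by such an $h$, which is precisely why the statement holds for every club $E$.
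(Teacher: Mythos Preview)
Your proposal is correct and follows exactly the paper's approach: take $q=(\bigcup_n Z_{p_n}\cup\bigcup_n h(Z_{p_n}),\,\bigcup_n A_{p_n})$, using that $h$ fixes $T\rest\delta$ (hence each $A_{p_n}$) and that every $b\in Z_{p_n}$ agrees with $h(b)$ below $\delta$. The paper compresses this into three lines, while you spell out the verification of $M\cap\omega_1=N\cap\omega_1$, the pointwise fixing of $T\rest\delta$, and the cross-term disjointness of $A$ from $\bigcup Z'$.
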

\begin{proof}
Fix $X,h,M,N$ as in Definition \ref{S-cic} and $\delta = M\cap \omega_1 = N \cap \omega_1$.
Let $E \subset [X]^\omega$ be a club and 
$\langle p_n=(A_n, Z_n): n\in \omega \rangle $ be an $(M,\mathcal{P})$-generic sequence.
Then $h(A_n)= A_n \subset T\rest \delta$ and for all $b \in Z_n$, $\Delta(b , h(b)) > \delta $.
Therefore, $(\bigcup_{n \in \omega} A_n, \bigcup_{n \in \omega}(Z_n \cup h(Z_n))$ is a common lower bound as desired by Definition \ref{S-cic}.
\end{proof}

\begin{lem}\label{cc}
Assume $\CH$ and let $P=\Seq{P_i,Q_j:j<\mu, i \leq \mu}$
be a  countable support iteration of forcing notions where $Q_0 = Q$ of Definition \ref{Q} and $T$ be the generic tree 
for $Q_0$. Moreover assume that for all $j >0$
either $Q_j$ is of the form $\Fcal_{X,Y}(T)$ as in Definition \ref{F}
or $Q_j$ satisfies $\Omega(T)$-cic for $\aleph_2$.
Then $P$ has the $\aleph_2$-cc.
\end{lem}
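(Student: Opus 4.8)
The plan is to reduce everything to the isomorphism machinery of \cite{second}: a countable support iteration all of whose iterands are $\Omega(T)$-complete and satisfy the $\Omega(T)$-cic for $\aleph_2$ has the $\aleph_2$-cc, and $\CH$ supplies the cardinal arithmetic $\aleph_1^{\aleph_0}=\aleph_1$ needed to run the underlying $\Delta$-system argument on the countable supports. Every iterand here is $\Omega(T)$-complete: the iterands $\Fcal_{X,Y}(T)$ by Lemma \ref{F_omega}, and $Q_0=Q$ together with the remaining iterands by their $\sigma$-closure. By Lemma \ref{iteration} the whole iteration is then $\Omega(T)$-complete, so by Lemma \ref{No New branch} it adds no new cofinal branch to $T$; hence $T$, its branch enumeration $\Seq{b_\xi:\xi\in\omega_2}$ and the fast clubs are computed correctly at every intermediate stage and $\Omega(T)$ stays stationary, which keeps each iterand of the form $\Fcal_{X,Y}(T)$ meaningful along the iteration. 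Since by hypothesis the non-$\Fcal$ iterands already satisfy the $\Omega(T)$-cic for $\aleph_2$, the only thing left to verify is that each $\Fcal_{X,Y}(T)$ does as well; this is the heart of the matter.

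So I would fix $\Fcal=\Fcal_{X,Y}(T)$ and a situation as in Definition \ref{S-cic} with $\mathcal P=\Fcal$, $S=\Omega(T)$, $\kappa=\aleph_2$: suitable $M,N$ with $M\cap\Bcal(T),N\cap\Bcal(T)\in\Omega(T)$, an isomorphism $h:M\to N$ fixing $M\cap N$, the separation $\min((N\setminus M)\cap\omega_2)>\sup(M\cap\omega_2)$, and an $(M,\Fcal)$-generic sequence $\Seq{p_n:n\in\omega}$; write $\delta=M\cap\omega_1=N\cap\omega_1$. First I would complete the two generic sequences exactly as in the proof of Lemma \ref{F_omega}: $\Seq{p_n:n\in\omega}$ completes to a condition $p^{*}\in\Fcal$ of height $\delta$ with $\dom(\pphi_{p^{*}})=\Bcal(U)\cap M$ and $f_{p^{*}}:U(\delta)\to V(\delta)$ a bijection, and, since $h$ is an isomorphism, $\Seq{h(p_n):n\in\omega}$ is $(N,\Fcal)$-generic and completes to $p^{**}\in\Fcal$ of height $\delta$ with $\dom(\pphi_{p^{**}})=\Bcal(U)\cap N$. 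The candidate common lower bound is $q$ with $\pphi_q=\pphi_{p^{*}}\cup\pphi_{p^{**}}$ and $f_q=f_{p^{*}}$.

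The first observation that makes this work is that the tree parts cause no conflict. Since $\delta<\omega_1$, the set $T\rest\delta$ is a countable element of $M\cap N$, hence $T\rest\delta\subseteq M\cap N$ and $h$ fixes it pointwise; consequently $f_{p_n}=f_{h(p_n)}$ for every $n$, and the two completions extend a common level isomorphism defined below $\delta$. Because $T$ is normal at limit levels, a node of $V(\delta)$ is determined by its set of predecessors, so the value of either completion at $t\in U(\delta)$ is the unique node of $V(\delta)$ lying above the image of the predecessors of $t$; therefore $f_{p^{*}}=f_{p^{**}}$ on all of $U(\delta)$, and Condition \ref{con} is satisfied consistently by both halves of $\pphi_q$. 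It then remains to check that $\pphi_q$ is a legitimate second coordinate, namely that it is one-to-one, preserves $<_{\lex}$, respects the club $C$, and obeys Conditions \ref{restriction} and \ref{finite}; each of these holds separately on the $M$-half and the $N$-half, so the content lies entirely in the interaction between the two halves.

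The main obstacle, and the place where the separation hypothesis and Condition \ref{respect} are indispensable, is ruling out a bad interaction. The branches moved by $\pphi_{p^{*}}$ have indices in $M\cap\omega_2$ and those moved by $\pphi_{p^{**}}$ have indices in $N\cap\omega_2$; by the separation we have $N\cap\gamma=(M\cap N)\cap\gamma$ for $\gamma=\sup(M\cap\omega_2)=M\cap\omega_2$, and $\gamma\in C$ because $C\in M$ is a club and $C\cap M$ is cofinal in $\gamma$. If some branch $c$ lay in $\range(\pphi_{p^{*}})\cap\range(\pphi_{p^{**}})$, then $\iota(c)\in(M\cap N)\cap\omega_2\subseteq\gamma$, so Condition \ref{respect} applied at $\gamma$ forces the unique $\pphi_{p^{*}}$-preimage of $c$ to have index below $\gamma$, hence in $M$; the same computation on the $N$-side together with the fact that $h$ fixes $M\cap N$ forces the two preimages to coincide, so $\pphi_q$ is injective. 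An identical use of Condition \ref{respect} at $\gamma$ shows the two halves can only overlap on $M\cap N$, where they agree, which yields Conditions \ref{restriction}, \ref{respect}, and (with Condition \ref{finite} on each half) \ref{finite} for $\pphi_q$, while the fact that $f_q$ is a $<_{\lex}$-isomorphism compatible with both halves via Condition \ref{con} gives \ref{1-1}. Hence $q\in\Fcal$ is a common lower bound for $\Seq{p_n:n\in\omega}$ and $\Seq{h(p_n):n\in\omega}$, so $\Fcal$ has the $\Omega(T)$-cic for $\aleph_2$; with this every iterand satisfies the hypotheses of the iteration theorem, and $P$ has the $\aleph_2$-cc. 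I expect the subtlest bookkeeping to be the cross-order verification of Condition \ref{1-1} for two branches from opposite halves sharing a top node at level $\delta$, which is handled by the same alignment of $C$ with $\gamma$ together with the finiteness in Condition \ref{finite}.
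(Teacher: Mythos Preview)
Your reduction to proving that each $\Fcal_{X,Y}(T)$ satisfies the $\Omega(T)$-cic for $\aleph_2$ does not go through; in fact $\Fcal_{X,Y}(T)$ does \emph{not} satisfy this condition, and the paper remarks on exactly this point immediately after the proof. The failure is precisely at the ``subtlest bookkeeping'' you flag at the end. Take $b\in\dom(\pphi_{p^{*}})\setminus N$; then $h(b)\in\dom(\pphi_{p^{**}})$ and, since $T\rest\delta\subset M\cap N$ is fixed by $h$, the branches $b$ and $h(b)$ agree up to level $\delta$ and hence share the same node of $T(\delta)$. Condition \ref{con} then forces $\pphi_{p^{*}}(b)$ and $\pphi_{p^{**}}(h(b))=h(\pphi_{p^{*}}(b))$ to agree up to $\delta$ as well, so $f_q$ tells you nothing about their relative $<_{\lex}$-order. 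What you would need is that $b<_{\lex} h(b)$ iff $\pphi_{p^{*}}(b)<_{\lex} h(\pphi_{p^{*}}(b))$; but both sides are determined by the tree $T$ above level $\delta$, which is already fixed and is in no way correlated across these pairs. Neither the club $C$ nor Condition \ref{finite} can repair this: the fast club only governs the ordinal indices $\iota(b)$, not the lexicographic order of branches above $\delta$.

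The paper therefore does not factor through an $\Omega(T)$-cic argument for $\Fcal$. Instead it proves the $\aleph_2$-cc for the whole iteration directly, carrying out the $\Delta$-system and isomorphism setup on $P$ itself and then building a common lower bound coordinate by coordinate. The crucial move is at coordinate $0$: since $T$ is still being built by $Q$, one constructs $p(0)$ so that level $\delta+1$ is added with the lexicographic order \emph{chosen} to satisfy $b_{p(0)}(\alpha)<_{\lex} b_{p(0)}(h(\alpha))$ for every $\alpha\in (M_\xi\setminus M_\eta)\cap\omega_2$. This synchronizes the cross-halves order once and for all, and is exactly what makes the verification of \ref{1-1} succeed at every later $\Fcal_{X,Y}$-coordinate. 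Without this freedom at $Q_0$ your argument cannot be completed.
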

\begin{proof}
Assume $\Seq{p_\xi : \xi \in \omega_2}$ is a sequence of conditions in 
$P$.
Let $\lambda > |2^P|^+$ be a regular cardinal and 
$\Seq{M_\xi: \xi \in \omega_2}$ be a sequence of countable 
elementary submodels of $H_\lambda$ such that 
$\{P,\Seq{p_\xi: \xi \in \omega_2}, \xi \}$ is an element of $ M_\xi$.
Define  $r : \omega_2 \longrightarrow \omega_2 $ by $r(\xi) = \sup ( M_\xi \cap \xi)$.
Obviously $r$ is regressive on the set of all ordinals of uncountable 
cofinality.
Let $W_0$ be a stationary subset of $\omega_2$ such that 
$r \rest W_0$ is constant and $\xi_0 > r(\xi_0)$ where $\xi_0 = \min(W_0)$.
Fix $W_1 \subset W_0$ with  $|W_1|= \aleph_2$ such that for all 
pairs $\xi < \eta$ in $W_1$, $\sup(M_\xi \cap \omega_2) < \eta$. 
In particular, for all pairs $\xi < \eta $ in $W_1$, 
\begin{equation}\label{ordered_models}
\min((M_\eta \setminus M_\xi) \cap \omega_2) > \sup(M_\xi \cap \omega_2).
\end{equation}

$\CH$ implies that $\{(M_\xi, p_\xi): \xi \in W_1 \}$ has at most $\aleph_1$-many isomorphism 
types.
Use $\CH$ and find  $W_2 \subset  W_1$ with $|W_2|=\aleph_2$
such that 
$\{ M_\xi : \xi \in W_2 \}$ forms a $\Delta$-system with root $\Delta$
and 
for all pairs $\xi <\eta$ in $W_2$, $(M_\xi,\Delta, p_\xi) \cong_{h_{\xi \eta}}
(M_\eta, \Delta, p_\eta)$.
Note that by the axiom of extensionality, the isomorphism $h_{\xi \eta}$ from 
$M_\xi$ to $M_\eta$ is uniquely determined by $M_\xi$ and  $M_\eta$.
In particular for all pairs $\xi < \eta$ in $W_2$
\begin{equation}
(M_\xi, p_\xi) \cong_{h_{\xi \eta}}
(M_\eta, p_\eta) \textrm{ and }
h_{\xi \eta} \rest (M_\xi \cap M_\eta) = \id_{(M_\xi \cap M_\eta)}.
\end{equation}

We show that the elements in $\{ p_\xi: \xi \in W_2 \}$ are pairwise 
compatible. In order to see this fix $\xi < \eta$ in $W_2$ and let 
$h = h_{\xi \eta}$.
Let $\Seq{p_\xi^n: n \in \omega}$ be a decreasing $(M_\xi, P)$-generic
sequence with $p_\xi^0 = p_\xi$.
Observe that $\Seq{p_\eta^n = h(p_\xi^n) : n \in \omega}$ 
is a decreasing $(M_\eta,P)$-generic sequence.
Our aim is to show that there is $p \in P$ which is a 
common lower bound for $\Seq{p_\xi^n: n \in \omega}$ and 
$\Seq{p_\eta^n: n \in \omega}$. 

We proceed by defining $p(\beta)$ for every $\beta \in \mu$ inductively.
Let $\delta = M_\xi \cap \omega_1$.
If $\beta \notin M_\xi \cup M_\eta$ let $p(\beta)$ be the trivial condition.
First assume $\beta = 0$.
Recall that $p_\xi^n(0)$ and $p_\eta^n (0)$ are conditions in $Q.$
Note that $\bigcup_{n \in \omega}T_{p_\xi^n (0)} =
\bigcup_{n \in \omega}T_{p_\eta^n (0)}$
since  $h$ is an isomorphism  from $ (M_\xi, \Seq{p_\xi^n : n \in \omega})$ to 
$ (M_\eta, \Seq{p_\eta^n : n \in \omega})$.
Let $R^- = \bigcup_{n \in \omega}T_{p_\xi^n (0)}$.
For every $\alpha \in M_\xi \cap \omega_2$ and $n \in \omega$ with 
$\alpha \in \dom(b_{p_\xi^n(0)})$,
$b_{p_\xi^n(0)}(\alpha)=h(b_{p_\xi^n(0)}(\alpha))= b_{p_\eta^n(0)}(h(\alpha))$, 
since $h$ fixes the intersection.
By the genericity assumptions, 
$\Seq{b_{p_\xi^n(0)}(\alpha): n \in \omega \textrm{ and } \alpha \in \dom(b_{p_\xi^n(0)})}$, 
which equals $\Seq{b_{p_\eta^n(0)}(h(\alpha)) : n \in \omega
\textrm{ and } \alpha \in \dom(b_{p_\xi^n(0)})}$, 
is a cofinal branch of $R^-$.
Let $R$ be a countable tree of height $\delta+1$ such that $R\rest \delta = R^-$ and every cofinal branch $c$ of  $R^-$ has a unique top element if
$\Seq{b_{p_\xi^n(0)}(\alpha): n \in \omega \textrm{ and } \alpha \in \dom(b_{p_\xi^n(0)})}$
is cofinal in $c$
for some $\alpha \in M_\xi \cap \omega_2$.
If there is no $\alpha \in M_\xi \cap \omega_2$ such that the sequence 
$\Seq{b_{p_\xi^n(0)}(\alpha): n \in \omega \textrm{ and } \alpha \in \dom(b_{p_\xi^n(0)})}$ is cofinal in $c$ 
we do not extend $c$ in $R$.
Let $T_{p(0)}$ be a lexicographically ordered countable tree of height 
$\delta +2$ such that $T_{p(0)} \rest (\delta+1) = R$ and 
for every $t \in R(\delta)$ the set $\{ s \in T_{p(0)}: s >t \}$
is isomorphic to the rationals when it is considered with the lexicographical order from $T_{p(0)}$.

Let $\dom(b_{p(0)}) = (M_\xi \cup M_\eta) \cap \omega_2$.
Let $A=(M_\xi \cap M_\eta) \cap \omega_2$, $B=(M_\xi \setminus M_\eta) \cap \omega_2$, and $C = (M_\eta \setminus M_\xi) \cap \omega_2$, which form a partition of $(M_\xi \cup M_\eta) \cap \omega_2$.
If $\alpha \in A \cup C$, let $b_{p(0)}(\alpha)$ be any element in $T_{p(0)}(\delta+1)$ which is above all elements in $\{b_{p_\eta^n(0)}(\alpha) : n \in \omega \textrm{ and } \alpha \in \dom(b_{p_\eta^n(0)}) \}$.
For $\alpha \in B$ let  
$t \in R(\delta)$ such that $t <_{T_{p(0)}} b_{p(0)}(h(\alpha))$. 
Now let $b_{p(0)} (\alpha)$ be ant element in $T_{p(0)}(\delta+1)$
which is above $t$ with $b_{p(0)}(\alpha) <_{\lex} b_{p(0)}(h(\alpha))$.
It is easy to observe that 
$p(0) =(T_{p(0)}, b_{p(0)}) \in Q$ is a common lower bound for 
both sequences $\Seq{p_\xi^n(0): n \in \omega}$ and $\Seq{p_\eta^n(0): n \in \omega}$.

For $\beta \in \mu \setminus 1$ in $M_\xi \cup M_\eta$ assume $p\rest \beta$ is given 
such that $p(0)$ is as above and $p \rest \beta$ is a common 
lower bound for $\Seq{p_\xi^n \rest \beta: n \in \omega}$ and $\Seq{p_\eta^n \rest \beta: n \in \omega}$.
In what follows, $\dot{G}$ is the canonical $P_\beta$-name for its generic filter. 
It suffices to find $p(\beta)$ such that $p \rest (\beta +1)$ is a common lower bound for $\Seq{p_\xi^n \rest (\beta +1): n \in \omega}$ and $\Seq{p_\eta^n \rest (\beta +1 ): n \in \omega}$ in the following cases:
\begin{itemize}
\item[($a_0$)] $\beta \in M_\xi \cap M_\eta$ and $\dot{Q}_\beta$ is a $P_\beta$-name for a poset
which satisfies $\Omega(T)$-cic for $\aleph_2$, 
\item[($a_1$)] $\beta \in M_\xi \cap M_\eta$ and $\dot{Q}_\beta$
is a $P_\beta$-name for some $\Fcal_{X,Y}$,
\item[($b_0$)] $\beta \in M_\xi \setminus M_\eta$ and $\dot{Q}_\beta$ is a $P_\beta$-name for a poset
which satisfies $\Omega(T)$-cic for $\aleph_2$, 
\item[($b_1$)] $\beta \in M_\xi \setminus M_\eta$ and $\dot{Q}_\beta$
is a $P_\beta$-name for some $\Fcal_{X,Y}$,
\item[($c_0$)] $\beta \in M_\eta \setminus M_\xi$ and $\dot{Q}_\beta$ is a $P_\beta$-name for a poset
which satisfies $\Omega(T)$-cic for $\aleph_2$, 
\item[($c_1$)] $\beta \in M_\eta \setminus M_\xi$ and $\dot{Q}_\beta$
is a $P_\beta$-name for some $\Fcal_{X,Y}$.
\end{itemize}

Assume ($a_0$).
Observe that $p \rest \beta$ is both $(M_\xi, P_\beta)$-generic and $(M_\eta,P_\beta)$-generic and it decides both $\dot{Q}_\beta \cap M_\xi[\dot{G}]$, $\dot{Q}_\beta \cap M_\xi[\dot{G}]$.
Also $p \rest \beta$ forces that $\delta = M_\xi[\dot{G}] \cap \omega_1 = M_\eta[\dot{G}] \cap \omega_1$.
The way $R(\delta)$ was defined guarantees that 
$ p\rest \beta \Vdash \{M_\xi[\dot{G}] \cap \Bcal(\dot{T}), 
M_\eta[\dot{G}] \cap \Bcal(\dot{T}) \} \subset \Omega(\dot{T})$.
Moreover for all $q \in M_\xi \cap P_\beta$,
 $(p \rest \beta) \Vdash_{P_\beta} ``q \in \dot{G} \leftrightarrow h(q) \in \dot{G}."$
In particular, 
$p \rest \beta$ forces $h[\dot{G}]: M_\xi[\dot{G}] \longrightarrow M_\eta[\dot{G}]$ -- which is defined by $\tau_G \mapsto h(\tau)_G$ -- is an isomorphism. 
Note that $h[\dot{G}]$ is decided by $p \rest \beta$.
It is forced by $p \rest \beta$ that $h[\dot{G}]$ fixes $M_\xi[\dot{G}] \cap M_\eta[\dot{G}]$ since it forces that
$(M_\xi \cap M_\eta) \cap \lambda = M_\xi[\dot{G}] \cap M_\eta[\dot{G}] \cap \lambda$ -- which is equivalent to the assertion that
$p\rest \beta$ is $(M_\xi \cap M_\eta, P_\beta)$-generic. 
Recall that $\dot{Q}_\beta$ is a $P_\beta$-name for  a poset which 
satisfies $\Omega(T)$-cic for $\aleph_2$.
Therefore, $p\rest \beta$ forces that the sequences 
$\Seq{p_\xi^n(\beta): n \in \omega}$,
$\Seq{p_\eta^n(\beta): n \in \omega}$
have a common lower bound which we take to be $p(\beta)$.

Assume ($a_1$). 
Let $\dot{X}, \dot{Y}, \dot{U}, \dot{V}, \dot{C}$ in $M_\xi \cap M_\eta$
be the $P_\beta$-names which correspond to the objects in 
Definition \ref{F}.
Since $p \rest \beta$ is $(M, P_\beta)$-generic for $M \in \{ M_\xi, M_\eta \}$, it decides 
$\Seq{p_\xi^n(\beta): n \in \omega}$,
$\Seq{p_\eta^n(\beta): n \in \omega}$,
$|\dot{X}|, |\dot{Y}|, \iota [\dot{X}] \cap M, \iota[\dot{Y}]\cap M,$ $ \iota[\Bcal(\dot{U})] \cap M$, $\iota[\Bcal(\dot{V})] \cap M$, $\dot{U} \cap R^-, 
\dot{V} \cap R^-$ for $M \in \{M_\xi, M_\eta \}$. 
Since $p\rest \beta$ forces that $M_\xi, M_\eta$ capture all elements in 
$\dot{T}$, it decides $U(\delta)=\dot{U} \cap R(\delta)$ and $V(\delta)=\dot{V} \cap R(\delta)$.
Let $\kappa \in M_\xi \cap M_\eta$ be the cardinal which is forced by $p \rest \beta$
to be the cardinality of $\dot{X}, \dot{Y}$. 

Define $A_{p(\beta)} = \{ \delta \} \cup \bigcup_{n \in \omega} A_{p_\xi^n(\beta)}$,
$\psi_{p(\beta)} = [\bigcup_{n \in \omega} \psi_{p_\xi^n(\beta)}] \cup [\bigcup_{n \in \omega} \psi_{p_\eta^n(\beta)}]$,
$f=\{([b_{p(0)}(\alpha)](\delta), [b_{p(0)}(\psi_{p(\beta)}(\alpha))](\delta)): \alpha \in \omega_2 \cap M_\xi  \}$, and 
$f_{p(\beta)} = f \cup \bigcup_{n \in \omega} f_{p_\xi^n(\beta)}$.
Observe that $f$ is a bijection between $U(\delta)$ and $V(\delta)$
which preserves the lexicographic order which is forced by $p\rest \beta.$ 
First we show that $\psi_{p(\beta)}$ is a function.
Assume $\alpha \in \dom(\psi_{p(\beta)})$.
If $\alpha \in M_\xi \setminus M_\eta$ or $\alpha \in M_\eta \setminus M_\xi$ there is nothing to prove.
So assume $\alpha \in M_\xi \cap M_\eta.$
Observe that $p\rest \beta$ forces $\alpha \in \sup(M_\xi \cap M_\eta \cap \omega_2) \in \dot{C}$.
Then $\psi_{p_\xi^n(\beta)}(\alpha) \in M_\xi \cap M_\eta$ if $\alpha \in \dom(\psi_{p_\xi^n})$.
Since $h$ fixes $M_\xi \cap M_\eta$, $\psi_{p_\xi^n(\beta)}(\alpha) = \psi_{p_\eta^n(\beta)}(\alpha) $  if $\alpha \in \dom(\psi_{p_\xi^n})$.
Similar argument shows that $\psi_{p(\beta)}$ is one-to-one.

Now we show $p \rest \beta$ forces that $\pphi_{p(\beta)} = \iota^{-1} \circ \psi_{p(\beta)} \circ \iota$ preserves the lexicographic order of $\dot{T}$. 
Note that $p \rest \beta$ decides $\dot{b}_\alpha$ up to height 
$\delta + 1$ and the truth of $``\dot{b}_\alpha <_{\lex} \dot{b}_\gamma"$ for all $\alpha, \gamma $ in $ \omega_2 \cap (M_\xi \cup M_\eta)$.
Assume $p \rest \beta$ forces that $\dot{b}_\alpha <_{\lex} 
\dot{b}_\gamma$. 
We show $p \rest \beta \Vdash ``\dot{b}_{\psi_{p(\beta)}(\alpha)}
<_{\lex} \dot{b}_{\psi_{p(\beta)}(\gamma)}."$
If $\alpha, \gamma$ are both in one of $M_\xi, M_\eta$ there is nothing to show.
If $p \rest \beta \Vdash `` \dot{b}_\alpha , \dot{b}_\gamma$ agree up to height $\delta"$ then 
$\gamma = h(\alpha)$
and the desired assertion is forced by the definition of $p(0)$.
If $p \rest \beta \Vdash ``\Delta(\dot{b}_\alpha , \dot{b}_\gamma) < \delta"$ then the desired assertion follows from \ref{1-1} and \ref{con} of Definition \ref{F} and the fact that $f_{p_\eta^n(\beta)} = h(f_{p_\xi^n(\beta)}) = f_{p_\xi^n(\beta)}  $  for all $n \in \omega$.
Properties \ref{respect}, \ref{restriction}, \ref{finite}, \ref{con} 
of Definition \ref{F}
are immediate from the construction. 
This makes $p(\beta)$ a lower bound for the sequences 
$\Seq{p_\xi^n(\beta): n \in \omega}$ and 
$\Seq{p_\eta^n(\beta): n \in \omega}$.

Assume $(b_0)$. 
Note that $\Omega(T)$-cic implies $\Omega(T)$-completeness.
The argument in this case is similar to the one in $(a_0)$ except that 
we only need to find a lower bound $p(\beta)$ for the sequence $\Seq{p_\xi^n(\beta): n \in \omega}$ and we do not need to worry about two models and the 
isomorphism between them. 
Note that $p(\beta)$ which is found in this way is a common lower bound for both sequences $\Seq{p_\xi^n(\beta): n \in \omega}$ and 
$\Seq{p_\eta^n(\beta): n \in \omega}$ since the conditions in 
$\Seq{p_\eta^n(\beta): n \in \omega}$
are all trivial conditions.

The argument for $(c_0)$ is similar to $(a_0)$ in the same way.
Also the arguments for $(a_1), (b_1)$ and $(c_1)$ 
are similar by the same reasoning.
Moreover, the ordinals $\beta$ which fall into cases $(a_0), (a_1)$
are all smaller than the ordinals $\beta$ for which the assumptions 
$(b_0),(b_1), (c_0)$ or $(c_1)$ holds. 
This shows that we can find a common lower bound 
$p(\beta)$ for the sequences $\Seq{p_\xi^n(\beta): n \in \omega}$ and 
$\Seq{p_\eta^n(\beta): n \in \omega}$
inductively, as desired.
\end{proof}

Note that the forcing notion $\Fcal_{X,Y}$ which appear in the iteration 
in Lemma \ref{cc} do not satisfy $\Omega(T)$-cic because of the lexicographic order of the cofinal branches of $T$. 
However,  the tree $T$ is built generically and consequently the lexicographic order of the 
cofinal branches -- as specified in the construction of $p(0)$ in the proof of 
Lemma \ref{cc} -- can be robust often enough so that the behavior of 
$\Fcal_{X,Y}$ is close the forcing notions which satisfy $\Omega(T)$-cic.
This leads us to Theorems \ref{main} and \ref{min} which we are going to prove.
\begin{proof}[Proof of Theorem \ref{main}]
Let $\textsc{V}$ be a model of $\GCH$.
Let $P$ be the countable support iteration of forcing notions 
of length $\omega_3$ as in Lemma \ref{cc}
such that whenever $ \Bcal(T)= X \cup Y$ is a partition and 
$|\Bcal(T_t) \cap X|= |\Bcal(T_t) \cap Y| = \aleph_2$ for 
all $t \in T$ then $\Fcal_{X,Y}$ appears in the iteration.
Note that $T$ has no local end points and $X,Y$ satisfy 
the hypotheses of Definition \ref{F}.
Lemmas \ref{F_omega}, \ref{cc}, \ref{No New branch} and the usual bookkeeping arguments 
allow us to make sure that for all possible 
$X,Y$ described above $\Fcal_{X,Y}$ appears in the iteration.
Lemmas \ref{No A subtree} and \ref{iteration} imply that $T$ has no Aronszajn subtree 
in the extension of $\textsc{V}$ by $P$.
Lemmas \ref{F_omega} and \ref{cc} imply that $P$ preserves $\omega_1, \omega_2$. 
By Lemma \ref{branchDensity}, 
$\Fcal_{X,Y}$ makes $X,Y$ order isomorphic.
By Lemma \ref{height}, 
$\Fcal_{X,Y}$ adds a club embedding which makes $X,Y$ homeomorphic. 
\end{proof}

\begin{proof}[Proof of Theorem \ref{min}]
Let $\textsc{V}$ be a model of $\GCH$.
Let $P$ be the countable support iteration of forcing notions 
of length $\omega_3$ as in Lemma \ref{cc}
such that whenever 
$D$ is a dense subset $(\Bcal(T), <_{\lex})$ of size $\aleph_1$ and
$L \subset \Bcal(T)$ and $\bigcup L$ is everywhere 
Kurepa then $R_L * \Fcal_{D,Y_L}$ appears in the iteration, where
$Y_L \subset Z_L$ is dense, $ |Y_L| = \aleph_1$ and $Z_L$
is the generic object that is added by $R_L$.
By Lemmas \ref{Z_closed}, \ref{no_local_end_point} and \ref{Z_large}, 
$D$ and  $Y_L$ satisfy the hypotheses of Definition  \ref{F}. 
$\aleph_1$ and $\aleph_2$ are preserved in the same way as above.
By Lemma \ref{Z_closed}, $\Fcal_{D,Y_L}$ adds an order isomorphism 
from $\Bcal(T)$ to $Z_L$ via the dense subsets.
Since every Kurepa line contains an everywhere Kurepa suborder, 
in the extension of $\textsc{V}$ by $P$ there is an embedding 
from $\Bcal(T)$ to $L$, when $L$ is a Kurepa suborder of $(\Bcal(T), <_{\lex})$.
\end{proof}

\def\Dbar{\leavevmode\lower.6ex\hbox to 0pt{\hskip-.23ex \accent"16\hss}D}

\end{document}